\newtheorem{thm}{Theorem}
\newtheorem{prop}[thm]{Proposition}
\newtheorem{lem}{Lemma}
\theoremstyle{definition}
\theoremstyle{remark}
\numberwithin{equation}{section}
\begin{document}

\title[Topological Wiener-Wintner ergodic theorem ]{Topological Wiener-Wintner ergodic theorem with polynomial weights}

 \author{Ai-hua Fan}
\address{LAMFA, UMR 7352 CNRS, University of Picardie, 33 rue Saint Leu,80039 Amiens, France}
\email{ai-hua.fan@u-picardie.fr}
\thanks{}
\maketitle






\begin{abstract} For a totally uniquely ergodic dynamical system, we prove a topological Wiener-Wintner ergodic theorem with polynomial weights under
the coincidence  of the quasi discrete spectrums of the system in both senses
of Abramov and of Hahn-Parry. The result applies to ergodic nilsystems. Fully oscillating sequences can then be constructed on nilmanifolds. 

\end{abstract}





\section{Introduction}
Let $(X, \mathcal{B}, \mu, T)$ be an ergodic measure-preserving dynamical system. The Wiener-Wintner ergodic theorem states that for any $f\in L^1(\mu)$ and for almost all $x\in X$, the limit
\begin{equation}\label{WWT}
     \lim_{N\to \infty}\frac{1}{N} \sum_{n=0}^{N-1} e^{2\pi i n \alpha} f(T^n x)
\end{equation} 
exists for all $\alpha \in \mathbb{R}$ \cite{WW}.  Since the work \cite{WW}, several different proofs have appeared \cite{Furstenberg1960, BL, L1990}.
Bourgain \cite{Bourgain1990} (see also \cite{Assani}) proved that the above limit is uniform on $\alpha$ and  the limit is zero if $f\in E_1(T)^\perp$ where $E_1(T)$ is the set of eigenfunctions.

Lesigne \cite{L1990,L1993} proved a generalized  Wiener-Wintner theorem which states that for almost all $x$  the limit
\begin{equation}\label{PWWT}
     \lim_{N\to \infty}\frac{1}{N} \sum_{n=0}^{N-1} e^{2\pi i P(n)} f(T^n x)
\end{equation}
exists for all real polynomials  $P$. Under the further total ergodicity, the necessary and sufficient condition on $f$ was found for the limit in (\ref{PWWT}) to be zero \cite{L1993}.
   The notion of  Abramov's quasi discrete spectrum is used to describe  that condition. See \cite{Abramov} for this spectrum theory, which
   finds its origin in Halmos and von Neumann \cite{HN}. Recall that for the above ergodic system, 
one  defines inductively the group of $k$-th (measurable) quasi eigenfunctions by
$$
     E_{k}(T) = \{f \in L^2(\mu):  |f|=1, Tf \cdot \overline{f} \in E_{k-1}(T)\}, \quad \forall k\ge 1
$$     
where $E_0(T)$ denotes the group of eigenvalues. 
Lesigne proved that if we assume that  $(X, \mathcal{B}, \mu, T)$ is totally ergodic, then $f \in E_k(T)^\perp$ if and only if  for a.e. $x\in X$, the limit (\ref{PWWT})
is equal to zero 
for all $P\in \mathbb{R}_k[t]$, where $\mathbb{R}_k[t]$ denotes the set of polynomials of degree at most $k$ with real coefficients. 
Later Frantzikinakis \cite{Frantz2006} proved that the limit in (\ref{PWWT}) is uniform in $P\in \mathbb{R}_k[t]$, answering a question of Lesigne \cite{L1993} (pp. 771)
and generalizing the result of Bourgain mentioned above.  There is a version of Wiener-Wintner ergodic theorem with nilsequences as weights obtained by Host-Kra \cite{HK2009} (see also \cite{EZ_K}).

Lesigne's result can be restated as follows. If $f \in E_k(T)^\perp$, the sequence $f(T^n x)$ is oscillating of order $k$ for a.e. $x$. 
Recall that a sequence  $(w_n)_{n \ge 0}$ of complex numbers  is defined to be {\em oscillating of order $d$} ($d \ge  1$) if for any real polynomial $P \in \mathbb{R}_d[t]$  we have
$$
    \lim_{N\to \infty} \frac{1}{N}\sum_{n=0}^{N-1} w_n e^{ 2\pi i P(n)} =0.
$$
A {\em fully oscillating sequence} is defined to be an oscillating sequence of all orders. These two notions of oscillation were introduced in \cite{F}. The notion of oscillation of order $1$ was defined in \cite{FJ}, in order to consider 
questions similar to Sarnak's conjecture (\cite{Sarnak, Sarnak2}). Namely, 
for a given sequence $(w_n)$, we would like to find those topological dynamical systems $(X, T)$ of zero entropy such that
\begin{equation} \label{SarnakConj}
   \lim_{N\to \infty} \frac{1}{N}\sum_{n=0}^{N-1} w_n f(T^n x) =0
\end{equation}
for any $f \in C(X)$ and any $x \in X$. 
Sarnak's conjecture states that the limit in (\ref{SarnakConj}) is zero for all systems of zero entropy when $(w_n)$  is the M\"{o}bius function.
Sarnak's conjecture is proved for different systems \cite{Bourgain13a,Bourgain13b,BSZ,DK2015, EALdlR14,EALdlR16,EKLR17,F,F2,FJ,FKPLM15,Green-Tao2008,GT2012,HLSY2017,HWZ2016,HWY2017,LS,MR,Veech,Wang2017} .

One motivation of the present work is to find topological dynamical systems  $(X, T)$
and continuous functions $f$ such that $(f(T^n x))$ is fully oscillating or oscillating of order $d$ for all $x\in X$ without exception. 
If $T$ is an affine dynamics of zero entropy on a compact abelian group, there
 is no such function different from zero which gives  fully oscillating sequences \cite{F2, Shi}. But as we shall see,  we can find such functions
for some nilsystems, like  ergodic nilsystems on Heisenberg homogeneous spaces. \medskip

  There is already  a topological version of Wiener-Wintner
theorem due to Robinson \cite{Robinson} (see also Assani \cite{Assani}, Theorem 2.10). Let $(X, T)$ be a uniquely ergodic topological dynamical system. Suppose that $E_0(T)=G_0(T)$ where $E_0(T)$ (resp. $G_0(T)$) is the group of measurable (resp. continuous)  eigenvalues.
Then for any $f\in C(X)$ and any $x \in X$, the limit (\ref{WWT}) exists. Furthermore, the limit is zero if $f\in E_1(T)^\perp$. The condition $E_0(T)=G_0(T)$ is necessary to some extent. In fact,
Robinson constructed some strictly ergodic skew product on torus $\mathbb{T}^2$ such that  $E_0(T)\setminus G_0(T)\not=\emptyset$, for which there exist $e^{2\pi i \alpha} \in E_0(T)\setminus G_0(T)$,
$f\in C(X)$ and $x\in X$ such that the limit (\ref{WWT})  fails to exist.  See also \cite{Lenz2009}.

  We shall prove a topological version of Lesigne's Wiener-Wintner theorem, which generalizes to some extent Robinson's theorem. The condition we find will involve the quasi discrete spectrum of the system in the sense of Abramov \cite{Abramov} as well as  
  the quasi discrete spectrum of the system in the sense of Hahn-Parry \cite{HP0}. Recall that for a transitive topological dynamical system $(X,T)$,
  one  defines inductively the group of $k$-th (continuous) quasi eigenfunctions by
$$
     G_{k}(T) = \{f \in C(X):  |f|=1, Tf \cdot \overline{f} \in G_{k-1}(T)\}, \quad \forall k\ge 1.
$$     
  
  The main result in this paper is the following. 
  \medskip
  
{\bf Theorem A.}
{\em Let $(X, T)$ be a topological dynamical system and let $k\ge 1$ be an integer.  Suppose
\\
\indent {\rm  (H1)}  $(X, T^j)$ for $1\le j < \infty$ are all strictly ergodic. \\ 
\indent {\rm  (H2)}  $E_j(T) = G_j(T)$ for all $0\le j \le k$. \\
 Then for any continuous function $f \in C(X)$, the following assertions are  equivalent:\\
\indent \mbox{\rm (a) }  $f \in G_k(T)^\perp$;\\
\indent \mbox{\rm (b) }   for any  $x\in X$, we have
 \begin{equation}\label{lim_k}
  \lim_{N\to \infty }\sup_{P \in \mathbb{R}_k[t]} \left| \frac{1}{N} \sum_{n=0}^{N-1} e^{2\pi i P(n)} f(T^n x)\right|  =0.
\end{equation}
}

 If a system satisfies  (H1), we say it is {\em totally uniquely ergodic}.  The condition (H2) is referred to as the {\em coincidence of spectrums}
 up to order $k$.  
 
 Some result similar to Theorem A was obtained by Eisner and Zorin-Kranch \cite{EZ_K} where the sequence $e^{2\pi i P(n) \alpha}$ is replaced by nilsequences produced by Sobolev functions, but it was assumed that 
 the projection $f$ to some Host-Kra factor is zero, and consequently $f$ is orthogonal to certain Host-Kra factor (Host-Kra factor being introduced in \cite{HK2005}), not only to the Abramov factor. However it was only assumed
 in \cite{EZ_K} that $(X, T)$ is uniquely ergodic.  
 
 An application of the main theorem to ergodic nilsystems leads to the following theorem.

\medskip

{\bf Theorem B.}  
{\em 
Let $G$ be a connected and simply connected nilpotent Lie group, $\Gamma$ a discrete cocompact subgroup of $G$ and $g\in G$. Let $X=G/\Gamma$ be the nilmanifold and let   $T: X \to X$ be defined by $x\Gamma \mapsto gx\Gamma$.
Suppose that $(X, T)$ is uniquely ergodic. Then for any $F \in C(X)$ such that $F\in G_\infty(T)^\perp$ and any $x \in G$, the sequence $F(g^n x \Gamma)$ is fully oscillating. 
}
\medskip

Applied to Heisenberg groups, Theorem B gives us the following result, which was mentioned in \cite{F3}.
\medskip

{\bf Theorem C.}  
{\em  Let $m \ge 1$ and let $\alpha_1, \cdots, \alpha_m; \beta_1, \cdots, \beta_m$ be real numbers. Suppose  $1, \alpha_1, \cdots, \alpha_m, \beta_1, \cdots, \beta_m$
are $\mathbb{Q}$-independent.   For any continuous  function $\varphi \in C(\mathbb{T})$ such that $\int \varphi(x) dx=0$, the sequence 
$$
n\mapsto \varphi (n \alpha_1[n\beta_1] + \cdots + n \alpha_m[n\beta_m] )$$
 is fully oscillating.
}
\medskip

The function $ n\mapsto \varphi (n \alpha_1[n\beta_1] + \cdots + n \alpha_m[n\beta_m] )$ is a special generalized polynomial. Generalized polynomials, especial their uniform distributions,  have been
well studied by Haland \cite{Haland1993,Haland1994,Haland1995}, Bergelson and Leibman \cite{BL2007},  Leibman \cite{Leibman1998,Leibman2012}. Notice that 
the sequence $(e^{2\pi i n^{d+1}\alpha})$  with $\alpha$ irrational is uniformly distributed on $\mathbb{S}^1$, oscillating of order $d$ but not oscillating of order $d+1$.  
The oscillation is a notion relative to but different  from the uniform distribution. 

\medskip
Here is the sketch of the proof of Theorem A, which is a long argument by induction on $k$. The main idea is inspired by Lesigne \cite{L1993}. The proof of the case $k=1$ is essentially a simple application of Van der Corput 
inequality and Krylov-Bogoliubov theorem, but the proof of the uniformity on $\alpha$ (see  Theorem \ref{order1} (4)) follows an idea of Frantzkinakis \cite{Frantz2006}
(this idea is also used  in the proof of Proposition \ref{PropF}).  The Van der Corput inequality
also allows us to reduce the order $k$ of the polynomial $P$  to a polynomial of order $k-1$, by induction  (see Theorem \ref{orderk}). But  in this way the result is only proved  for all polynomials  but some exceptions.  To deal with these
exceptional polynomials,  we convert the problem to that of some unique ergodic extension of $(X, T^j)$ in the sense of Furstenberg \cite{Furstenberg1961} (see Lemma \ref{Ext-Erg2}).


\medskip

We make preparations in Section 2 (recall of two notions of quasi-discrete spectrums)  and Section 3 (extension of unique ergodicity) in order to prove Theorem A in Section 4. Theorem B and Theorem C are proved in Section 5. 

\section{Quasi discrete spectrums}

We recall here the two theories of spectrum, one measure-preserving and the other topological . 

\subsection{Definitions of two quasi-discrete spectra}
   
    Let $ (X,T) $ be a topological dynamical system. Assume that $ (X,T) $ is transitive, i.e. 
    the orbit $ O(x) := \{ T^n x : n \ge 0 \}$ of some $ x \in X $ is dense in X. Let $ C(X)$ be the 
    Banach algebra of continuous complex valued functions on $ X $ and $G(X)$ be the subset of $C(X)$ consisting of $f$ such that $|f(x)|=1$ for all $x\in X$. It is clear that $G(X)$ is a group 
    with multiplication as group operation. The quasi-discrete spectrum concerns the 
    isometry $f \mapsto f\circ T$  on $ C(X) $, which is still denoted by $T$, namely $ T f = f\circ T$. Now let us recall the 
    notion of quasi-discrete spectrum of Hahn-Parry \cite{HP0}, a notion similar to    Abramov's  on measure-theoretic 
    dynamics \cite{Abramov} which uses the concept of quasi-eigenfunction due to Halmos and von Neumann \cite{HN}. 
    
    We say that $ f \in C(X)$, $f \ne 0$, is an \textit{eigenfunction} if there is a complex number $\lambda \in \mathbb{C} $ 
    for which
    \begin{equation}\label{eq:1-4}
    f\circ T= \lambda f.
    \end{equation}
    The number $ \lambda $ is called an \textit{eigenvalue}. Let $ H_1 $ be the set of all eigenvalues. 
    The eigenfunctions corresponding to the eigenvalue $1$ are called {\em invariant functions}. 
    The transitivity of 
    $T$ implies that invariant functions are constant functions, and $H_1 \subset \mathbb{S}$ where $ \mathbb{S}$ is the group
    $\{z \in \mathbb{C} : |z| = 1\}$ under multiplication, and eigenfunctions have constant modulus. Denote 
    \[
    G_1:= \{f \in G(X) : \exists \lambda \in \mathbb{C} \ \mbox{\rm such \ that}\   Tf=\lambda f \}. 
    \]
    It is the group of eigenfunctions. 
    Let $G_0=H_1$ and let us identify a constant with a constant function.
   Thus we have $H_1 =  G_0 \subset G_1$.
   
    Quasi-eigenvalues and quasi-eigenfunctions have different orders. They are inductively defined. 
    Assume that  subgroups $H_n$ and $G_n$ of $G(X)$ are defined in a such a way that
     $$ H_1 \subset H_2 \subset \cdots \subset H_n; \quad
     G_1 \subset G_2 \subset \cdots \subset G_n; \forall i<n,  H_{i+1} \subset G_i.$$
    We define  $G_{n+1} $ to be the set of all $ f_{n+1} \in G(X)$ 
    such that there is a $ g_n \in  G_n $ with 
    \begin{equation}\label{eq:1-5}
    f_{n+1}\circ T = g_n f_{n+1}.
    \end{equation}
Then we define $H_{n+1}$ to be the set of all $g_n \in G_n $ for which there is an $f_{n+1} \in  G_{n+1}$ satisfying   
\eqref{eq:1-5}. Let
\[
G_\infty := \bigcup_{n=1}^{\infty}G_{n}, \quad H_\infty := \bigcup_{n=1}^{\infty}H_{n}.
\]
The elements in the group $H_\infty$ are called \textit{quasi-eigenvalues} and the elements in the group $G_\infty$ are called \textit{quasi-eigenfunctions}.
For $n \ge 2$, the elements in $H_n \setminus H_{n-1}$ are called \textit{$n$-th quasi-eigenvalues} and the elements in $G_n\setminus G_{n-1}$ are called \textit{$n$-th quasi-eigenfunctions}.
If necessary, we shall write $G_n(T)$ and $G_\infty(T)$ for $G_n$ and $G_\infty$. The notations $H_n(T)$ and $H_\infty(T)$ are sometimes also useful.   

A dynamical system $(X, T)$ is said to have \textit{quasi-discrete spectrum} if the algebra generated
by the quasi-eigenfunctions is dense in $C(X)$, or equivalently the linear span of quasi-eigenvalues
is dense in $C(X)$ because $G$ is a multiplicative group. By using Stone-Weierstrass
theorem we see that this is equivalent to say that quasi-eigenfunctions separate points of
$ X $. If, furthermore, $ G_d = G_{d+1} $ and $d_T$ is the least such integer, we say that $(X,T)$ has \textit{quasi-discrete
spectrum of order} $d_T$.

\subsection{Orthogonality of quasi-eigenfunctions}

\begin{prop} Let $(X, \mathcal{B}, \mu , T)$ be an ergodic measure-preserving  dynamical system. Suppose that there is no eigenvalue of finite order
(except the eigenvalue $1$). Then all quasi-eigenfunctions are orthogonal. 
\end{prop}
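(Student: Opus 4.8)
The plan is to reduce the statement to a mean-zero property and then argue by induction on the order of a quasi-eigenfunction. Write $E_\infty(T)=\bigcup_{k\ge 0}E_k(T)$ for the group of quasi-eigenfunctions, recalling that each $E_k(T)$ is a subgroup of the unit sphere of $L^2(\mu)$ that is stable under $T$. For $f_1,f_2\in E_\infty(T)$ we have $\langle f_1,f_2\rangle=\int f_1\overline{f_2}\,d\mu$, and $h:=f_1\overline{f_2}\in E_\infty(T)$ is constant precisely when $f_1$ and $f_2$ are proportional. Thus the assertion that ``all quasi-eigenfunctions are orthogonal'' amounts to saying that any two non-proportional quasi-eigenfunctions are orthogonal, and it suffices to prove the single claim that \emph{every non-constant $h\in E_\infty(T)$ satisfies $\int h\,d\mu=0$}. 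I would prove this by induction on the least $k$ with $h\in E_k(T)$.

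For the base case $k\le 1$, the function $h$ is an ordinary eigenfunction, $Th=\lambda h$. If $\lambda=1$ then $h$ is $T$-invariant, hence constant by ergodicity; otherwise the invariance of $\mu$ gives $\int h=\int Th=\lambda\int h$, forcing $\int h=0$. This step uses only ergodicity. For the inductive step, let $h\in E_k(T)$ with $k\ge 2$, and set $g:=(Th)\overline h\in E_{k-1}(T)$, so that $Th=gh$. Iterating yields $T^mh=g_m h$ with $g_m:=\prod_{j=0}^{m-1}T^jg\in E_{k-1}(T)$, and since $|h|\equiv 1$ we obtain $\langle T^mh,h\rangle=\int g_m\,d\mu$. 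Assume $h$ is non-constant and, for contradiction, that $c:=\int h\neq 0$.

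The von Neumann mean ergodic theorem, together with ergodicity (so that the invariant subspace reduces to the constants), gives $\frac1N\sum_{m=0}^{N-1}\langle T^mh,h\rangle\to |c|^2>0$. On the other hand, the induction hypothesis applies to each $g_m\in E_{k-1}(T)$: either $g_m$ is non-constant and $\int g_m=0$, or $g_m$ is a unimodular constant and $\int g_m=g_m$. If every $g_m$ with $m\ge 1$ were non-constant, the Cesàro averages would tend to $0$; hence there is some $d\ge 1$ for which $g_d$ equals a constant $\lambda$ of modulus one, i.e. $T^dh=\lambda h$.

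It remains to exploit the hypothesis on eigenvalues, and this is where I expect the main difficulty to lie. Because $T$ is ergodic and has no finite-order eigenvalue other than $1$, no nontrivial $d$-th root of unity is an eigenvalue of $T$; decomposing any $T^d$-invariant function into $T$-eigenfunctions whose eigenvalues are $d$-th roots of unity then shows that $(X,T^d)$ is ergodic for every $d\ge 1$. Applying this to the $d$ found above, $h$ is an eigenfunction of the \emph{ergodic} system $(X,T^d)$ with eigenvalue $\lambda$: if $\lambda=1$ then $h$ is $T^d$-invariant and hence constant, contradicting non-constancy; if $\lambda\neq 1$ then $\int h=\lambda\int h$ forces $c=0$, again a contradiction. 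Either way the assumption $c\neq 0$ is untenable, completing the induction. The delicate points are the verification that all powers $T^d$ are ergodic under the hypothesis (this is the precise role of the no-finite-order condition) and the clean reduction, via the mean ergodic theorem and the ``zero-or-unimodular'' dichotomy for $\int g_m$, of the order-$k$ problem to the order-one situation for a suitable power $T^d$.
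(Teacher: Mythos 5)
Your proof is correct, and it takes a genuinely different route from the paper's. The paper works pointwise: it expands $f(T^nx)=f(x)e^{2\pi i p_x(n)}$ with $p_x$ an explicit polynomial built from binomial coefficients and the arguments of the lower-order quasi-eigenfunctions, writes $\int f\overline{g}\,d\mu$ as a Ces\`aro average of $\int f\overline{g}\,e^{2\pi i(p_x(n)-q_x(n))}\,d\mu$, interchanges limit and integral by dominated convergence, and kills the inner limit by Weyl's equidistribution theorem after a case analysis on the leading coefficients (the no-finite-order hypothesis enters by guaranteeing that the relevant leading coefficient is irrational). You instead reduce, via the group structure of $E_\infty(T)$, to the single claim that a non-constant quasi-eigenfunction has zero mean, and run a soft $L^2$ argument: the identity $\langle T^m h,h\rangle=\int g_m\,d\mu$ with $g_m=\prod_{j=0}^{m-1}T^jg\in E_{k-1}(T)$, the von Neumann mean ergodic theorem, and the inductive dichotomy (unimodular constant versus mean zero) force some power $T^d$ to admit $h$ as an eigenfunction, at which point total ergodicity of $T^d$ --- which is exactly what the no-finite-order hypothesis provides --- finishes the argument. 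Your version avoids Weyl's theorem, the explicit polynomial bookkeeping, and the limit/integral interchange, and it isolates the role of the hypothesis cleanly as total ergodicity of all powers; the paper's version is more explicit and directly exhibits the equidistribution underlying the orthogonality. Both are inductions on the order of the quasi-eigenfunction.
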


\begin{proof} 
Let $E(T)$ be the group of all $f\in L^\infty(\mu)$ such that $|f(x)|=1$ a.e.. 
Let us first make a remark: suppose 
\begin{eqnarray*}
      Tf_2 & = & f_1 f_2, \ \ Tf_1   = h f_1; \\
      Tg_2 &=& g_1 g_2, \ \ Tg_1 = h g_1 
\end{eqnarray*}
where $f_1, f_2, g_1, g_2, h \in E(T)$, 
then we have $f_1 = c g_1$ for some eigenvalue $c$. In fact, first observe that $g_1/f_1$. By the ergodicity we  get $g_1 = c f_1$ for some $c\in \mathbb{S}^1$.
Then from $Tf_2  =  f_1 f_2$ and $Tg_2  =  c f_1 g_2$, we get
$$
    \frac{Tg_2}{Tf_2} = c \frac{g_2}{f_2}. 
$$
So, $c$ is an eigenvalue to which the eigenfunction $g_2/f_2$ is associated.

Let us consider two arbitrary different quasi-eigenfunctions $f$ and $g$ which are not proportional. We are going to show $\int f \overline{g} d\mu =0$.
More precisely, let $f$ be a quasi-eigenfunction of order $k$ and $g$  be a quasi-eigenfunction of order $\ell$. Assume $1\le k \le \ell <\infty$.
In other words, 
$$
   Tf = f_{k-1} f, \ \ Tf_{k-1} = f_{k-2}f_{k-1}, \ \ \cdots, \ \ Tf_1 = f_0 f_1
$$
$$
   Tg = g_{\ell-1} g, \ \ Tg_{\ell-1} = g_{\ell-2}g_{\ell-1},\ \  \cdots, \ \ Tg_1 = g_0 g_1
$$
for some $f_j\in E(X)$ ($1\le j<k-1$) and  $g_j\in E(X)$ ($1\le j<\ell-1$), where $f_0$ and $g_0$ are two eigenvalues. 
By an inductive argument, we deduce that 
$$
    f(T^n x) = f(x) f_{k-1}(x)^{\binom{n}{1}} f_{k-2}(x)^{\binom{n}{2}} \cdots f_{1}(x)^{\binom{n}{k-1}} f_{0}(x)^{\binom{n}{k}}.
$$
Let $f_j(x) =e^{2\pi i \theta_j}$ $(0\le j \le k-1)$, where $\theta_j$ ($1\le j<k$) depends on $x$, but $\theta_0$ doesn't. We get 
$$
    f(T^n x) = f(x) e^{2\pi i p_x(n)}, \quad \mbox{\rm with}\ \  p_x(n) =  \theta_0 \binom{n}{k} + \theta_1 \binom{n}{k-1}+\cdots + \theta_{k-1} \binom{n}{1} .
$$
Similarly we have 
$$
    g(T^n x) = g(x) e^{2\pi i q_x(n)}, \quad \mbox{\rm with}\ \  q_x(n) =  \phi_0 \binom{n}{\ell} + \phi_1 \binom{n}{\ell-1}+\cdots + \phi_{\ell-1} \binom{n}{1} 
$$
where $\phi_j \in [0,1)$ is the argument of $g_j(x) = e^{2\pi i \phi_j}$. By the invariance we get 
$$
   \int f\overline{g} d\mu =   \int f\overline{g}  e^{2\pi i \big( p_x(n) - q_x(n) \big)}d\mu
$$
which holds for all $n$, so that we have
\begin{eqnarray*}
   \int f\overline{g} d\mu & = &   \lim_{N\to \infty}\frac{1}{N}\sum_{n=0}^{N-1} \int f(x)\overline{g(x)}  e^{2\pi i \big( p_x(n) - q_x(n) \big)}d\mu(x)\\
   & = &  \int f(x)\overline{g(x)}  \lim_{N\to \infty}\frac{1}{N}\sum_{n=0}^{N-1}  e^{2\pi i \big( p_x(n) - q_x(n) \big)}d\mu(x)
\end{eqnarray*}
when the last limit exists for all $x$ (for the last equality we use the Lebesgue dominated convergence theorem). This limit does exist and is equal to zero. Thus we finish the proof. 

We prove that  the limit is equal to zero by induction on $\ell$.  Assume $\ell=1$. Then $k=1$ and $f$ and $g$ are eigenfunctions. 
If $f_0\not=g_0$, it is well known that $f$ and $g$ are orthogonal. The case $f_0=g_0$ is not possible, because otherwise, $f$ and $g$ are proportional
as eigenfunctions associated to the same eigenvalue.  

Now we suppose the conclusion holds for $\ell-1$. 
We prove the case $\ell\ge 2$ by distinguishing several cases. 


{\em Case I. $k=\ell$, $f_0\not=g_0$}:  In this case, $p_x - q_x$ is a real polynomial of degree $\ell$ with leading coefficient $ (\theta_0-\varphi_0)/\ell !$ which is irrational, because 
$\theta_0-\varphi_0 \not =0$ is the argument of the eigenvalues $f_0 \overline{g}_0$. Without use of the induction hypothesis we conclude by Weyl theorem, because $p_x(n) -q_x(n)$ is uniformly distributed.

{\em Case II. $k=\ell$, $f_0 =g_0$}: In this case, we first apply the above remark to get $g_1 = c f_1$ where $c=e^{2\pi i \xi}$ is an eigenvalue.
 If $c\not= 1$, then $\xi$ is irrational and 
 $$
         p_x(n) - q_x(n) = \xi \binom{n}{k-1} + (\theta_2-\phi_2)\binom{n}{k-2}+\cdots
 $$
 is a polynomial of degree $k-1$ with leading irrational coefficient $ \xi \binom{n}{k-1}$. We conclude with Weyl theorem. If $c=1$, we get $g_1=f_1$ and fall into the following situation
 \begin{eqnarray*}
       Tf_3  =  f_2 f_3, \ \ Tf_2 = f_1f_2, \ \ (Tf_1=f_0 f_1)\\
        Tg_3  =  g_2 g_3, \ \ Tg_2 = f_1 g_2, \ \  (Tf_1=f_0 f_1).
 \end{eqnarray*}
 Again we apply the above remark to get $g_2= d f_2$ for some eigenvalue $d$.  If $d\not= 1$, we conclude. Otherwise we get $g_2=f_2$. 
 In this inductive  way, we can conclude otherwise we get  $g=f$, a contradiction.   
 
 {\em Case III. $k<\ell$}:  If $g_0=1$, then $g_1$ is an invariant function then constant. Thus we can forget the trivial equality $Tg_1=g_0g_1$ and just start with $Tg_2= g_1g_2$.
 In other words, we have reduced $\ell$ to $\ell -1$. Therefore  we can apply the induction hypothesis to conclude. If $g_0\not= 1$, then
  $p_x - q_x$ is a real polynomial of degree $\ell$ with leading coefficient $-\varphi_0/\ell !$ which is irrational. We conclude by 
 Weyl theorem.
\end{proof}

\subsection{Quasi-discrete spectra of $T$ and of $T^m$}

The following lemma has a version for measure-preserving dynamical systems, due to Lesigne \cite{L1993}, for which the ergodicity and the total ergodicity replace the transitivity and the total minimality.

\begin{lem} \label{EigenInv} Let $T_1$ and $T_2$ be two transitive maps on a compact metric space $X$. Suppose that $T_1$ and $T_2$ are commutative.
Then $G_k(T_1) = G_k(T_2)$ for all $k\ge 1$. In particular, if both $T$ and $T^m$ are transitive ($m\ge 2$) , then 
$$
         G_k(T) = G_k(T^m)
$$
for all $k\ge 1$.
\end{lem}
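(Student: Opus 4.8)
The plan is to prove the first statement $G_k(T_1)=G_k(T_2)$ for commuting transitive $T_1,T_2$ by induction on $k\ge 1$; the ``in particular'' assertion then follows at once by taking $T_1=T$ and $T_2=T^m$, since these two maps commute and are both transitive by hypothesis. Throughout I view $T_1,T_2$ as the multiplicative operators $T_i f=f\circ T_i$ on $C(X)$, so that commutativity of the maps yields $T_1T_2=T_2T_1$ as operators. By the symmetric roles of $T_1$ and $T_2$ it suffices at each stage to prove one inclusion; I will set up the induction so that the full equality $G_j(T_1)=G_j(T_2)$ for all $1\le j\le k$ is available when treating level $k+1$. The single fact I will use repeatedly is that on a transitive system a continuous invariant function is constant: if $\varphi\circ T_i=\varphi$, then $\varphi$ is constant along the dense forward orbit of a transitive point, hence constant by continuity.

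For the base case $k=1$, let $f\in G_1(T_1)$, say $T_1f=\lambda f$. Using $T_1T_2=T_2T_1$ one gets $T_1(T_2f)=T_2(\lambda f)=\lambda\,T_2f$, so $T_2f$ is again a $\lambda$-eigenfunction of $T_1$; hence the modulus-one continuous function $(T_2f)/f$ is $T_1$-invariant and therefore a constant $c$ with $|c|=1$. Then $T_2f=cf$, i.e. $f\in G_1(T_2)$, and by symmetry $G_1(T_1)=G_1(T_2)$.

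For the inductive step assume $G_j(T_1)=G_j(T_2)$ for all $1\le j\le k$ and take $f\in G_{k+1}(T_1)$, so that $T_1f=gf$ with $g\in G_k(T_1)$. Put $h:=(T_2f)/f\in G(X)$. The key computation, using multiplicativity of the operators together with $T_1T_2=T_2T_1$ and $T_1f=gf$, is
\[
T_1 h=\frac{T_1T_2 f}{T_1 f}=\frac{T_2T_1 f}{T_1 f}=\frac{T_2(gf)}{gf}=\frac{T_2 g}{g}\cdot\frac{T_2 f}{f}=\Big(\tfrac{T_2 g}{g}\Big)\,h .
\]
Now $g\in G_k(T_1)=G_k(T_2)$ by the induction hypothesis, so by the very definition of $G_k(T_2)$ the factor $\psi:=(T_2g)/g$ lies in $G_{k-1}(T_2)$, and the displayed identity reads $T_1h=\psi h$. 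If $k\ge 2$, then $\psi\in G_{k-1}(T_2)=G_{k-1}(T_1)$ by the induction hypothesis, whence $h\in G_k(T_1)$ by definition; if $k=1$, then $\psi$ is a constant of modulus one, and the relation $T_1h=\psi h$ with $h$ a nonzero continuous function exhibits $\psi$ as an eigenvalue of $T_1$ and $h$ as a $T_1$-eigenfunction, so again $h\in G_1(T_1)$. In either case $h\in G_k(T_1)=G_k(T_2)$, and since $T_2f=hf$ this gives $f\in G_{k+1}(T_2)$. By symmetry $G_{k+1}(T_1)=G_{k+1}(T_2)$, completing the induction.

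The only delicate point is the junction at $k=1$ in the inductive step: the groups of ordinary eigenvalues $G_0(T_1)$ and $G_0(T_2)$ genuinely differ (for $T_2=T^m$ they are related by $m$-th powers), so one cannot invoke an equality at level $0$—which the lemma pointedly does not assert. What rescues the argument is that once $T_1h=\psi h$ holds for a concrete modulus-one $h\in C(X)$, the constant $\psi$ is \emph{automatically} an eigenvalue of $T_1$, so $h$ lands in $G_1(T_1)$ in spite of the mismatch at level $0$. Everything else is a formal verification driven by the commutation $T_1T_2=T_2T_1$ and by the fact that transitivity forces invariant continuous functions to be constant.
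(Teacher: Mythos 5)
Your proof is correct and follows essentially the same induction as the paper: the same commutation computation $T_1\bigl(T_2f\cdot\overline{f}\bigr)=\bigl(T_2g\cdot\overline{g}\bigr)\,\bigl(T_2f\cdot\overline{f}\bigr)$ drives both arguments, with transitivity used only to force continuous invariant functions to be constant. Your explicit treatment of the junction where $\psi=(T_2g)/g$ lands in $G_0(T_2)$ is in fact slightly more careful than the paper, which at that step invokes "the induction hypothesis" for an equality $G_0(T_1)=G_0(T_2)$ it never asserts (and which is false for $T$ versus $T^m$); as you observe, the relation $T_1h=\psi h$ with $\psi$ constant makes $h$ an eigenfunction of $T_1$ automatically, so the gap is cosmetic.
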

\begin{proof} The proof is the same as in \cite{L1993}. It suffices to replace the ergodicity by the transitivity which ensures that a continuous invariant function is constant.
 We include the proof here for completeness. 
We prove it by induction on $k$. 
We assume $T_1f = \lambda f$ with $f \in G_1(T_1)$ and $\lambda \in \mathbb{S}$. By the commutativity, we have
$$
    T_1 (T_2f \cdot \overline{f}) = T_2 T_1 f \cdot  T_1 \overline{f} =|\lambda|^2 T_2  f \cdot   \overline{f}
    = T_2  f \cdot   \overline{f}.
$$
By the transitivity of $T_1$, we deduce that the $T_1$-invariant function $T_2  f \cdot   \overline{f}$ is a constant, so $f \in G_1(T_2)$. Thus $G_1(T_1)\subset G_1(T_2)$. By the symmetry, we get 
$G_1(T_1) = G_1(T_2)$.

Now assume that $G_j(T_1) = G_j(T_2)$ for all $1\le j<k$ ($k\ge 2$).  Let $f \in G_k(T_1)$. Then there exists $g \in G_{k-1}(T_1)$ such that 
$$T_1f = gf.$$
By the induction hypothesis, $g \in G_{k-1}(T_2)$. Then there exists $h \in  G_{k-2}(T_2)$ such that $$T_2g= h g.$$
 Thus, by the commutativity, we have
$$
    T_1(T_2 f) = T_2(T_1f) = T_2(g f) = T_2g \cdot T_2f = hg T_2f.
$$
It follows that
$$
        T_1(T_2 f \cdot \overline{f}) = hg T_2f \cdot \overline{g f}  = h (T_2f \cdot \overline{ f}).
$$
Since $h \in  G_{k-2}(T_2)$, we have $h \in  G_{k-2}(T_1)$ by the induction hypothesis. Therefore
$$
     T_2 f \cdot \overline{f} \in E_{k-1}(T_1).
$$
Again, by the induction hypothesis,
$$
     T_2 f \cdot \overline{f} \in G_{k-1}(T_2).
$$
So,  $f \in G_k(T_2)$. 
By the symmetry,  we have $G_k(T_1) = G_k(T_2)$.
\end{proof}

\section{Extension of unique ergodicity}
Assume that $(X, T)$ is a uniquely ergodic topological dynamical system with $\mu$ as the only invariant measure.  Let  $G$ be a compact abelian group with normalized Haar measure $m$
and $\phi: X \to G$ be a continuous map. Define the map $S:=S_\phi:  X\times G \to X\times G$ by
\begin{equation}\label{def-S}
       S(x, z) = (Tx, \phi(x) z)).
\end{equation}
The dynamical system $(X\times G, S)$ is called a group extension of $(X, T)$. The product measure $\mu \times m$ is $S$-invariant.
The following lemma of Furstenberg \cite{Furstenberg1961} (p. 579) gives the condition for a group extension  to be still uniquely ergodic
(the "only if" part is obvious).

  \begin{lem} [\cite{Furstenberg1961}] \label{lem_Furs}Suppose that $(X, T)$  is unique ergodic with $\mu$ as its invariant measure.  Then 
  the above defined  extension $(X\times G, S)$ is uniquely ergodic if (and only if) the $S$-invariant measure $\mu \times m$ is ergodic.
  It is the case iff the following equation
  $$
       \phi(x)^k = \frac{h(Tx)}{h(x)}
  $$
  has no solution for $k\not =0$ an integer and $h$ a measurable function.
  \end{lem}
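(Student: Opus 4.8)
The plan is to establish the two equivalences separately: first the abstract dichotomy that $(X\times G,S)$ is uniquely ergodic if and only if $\mu\times m$ is ergodic, and then the concrete characterization of this ergodicity through the cohomological equation. The ``only if'' direction of the first equivalence is immediate, since in a uniquely ergodic system the single invariant measure is automatically ergodic (an ergodic invariant measure always exists, so it must coincide with the only one). The whole content therefore lies in showing that ergodicity of $\mu\times m$ forces unique ergodicity, and in translating that ergodicity into the solvability of $\phi(x)^k=h(Tx)/h(x)$.

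For the hard direction of the dichotomy, the key structural fact is that the compact group $G$ acts on $X\times G$ by the fibre rotations $R_g(x,z)=(x,gz)$, and each $R_g$ commutes with $S$ because $SR_g(x,z)=(Tx,\phi(x)gz)=R_gS(x,z)$. Let $\nu$ be any $S$-invariant probability measure. Its projection to $X$ is $T$-invariant, hence equals $\mu$ by the unique ergodicity of the base. A direct computation using the translation invariance of the Haar measure $m$ then shows that the $G$-average $\int_G (R_g)_*\nu\,dm(g)$ equals $\mu\times m$ for \emph{every} such $\nu$. Now take $\nu$ ergodic. Since each $R_g$ commutes with $S$, every $(R_g)_*\nu$ is again $S$-invariant and ergodic, so any two of them are either equal or mutually singular; letting $H=\{g\in G:(R_g)_*\nu=\nu\}$, the distinct translates are indexed by $G/H$. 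Writing $\mu\times m=\int_{G/H}(R_{\bar g})_*\nu\,d\bar m(\bar g)$ exhibits the ergodic measure $\mu\times m$ as an integral of mutually singular ergodic measures, and by the uniqueness of the ergodic decomposition this is possible only if $G/H$ is a single point, i.e. $H=G$. Thus $\nu$ is $G$-invariant, and a $G$-invariant measure projecting to $\mu$ must carry Haar measure on each fibre, so $\nu=\mu\times m$. Hence every ergodic invariant measure equals $\mu\times m$, which is unique ergodicity.

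For the characterization through the cohomological equation I would use Fourier analysis in the fibre. Any $F\in L^2(\mu\times m)$ expands as $F(x,z)=\sum_{\gamma\in\widehat G}f_\gamma(x)\,\gamma(z)$ with $f_\gamma\in L^2(\mu)$, and
\[
(F\circ S)(x,z)=\sum_{\gamma\in\widehat G} f_\gamma(Tx)\,\gamma(\phi(x))\,\gamma(z).
\]
Hence $F$ is $S$-invariant if and only if $f_\gamma(Tx)\,\gamma(\phi(x))=f_\gamma(x)$ for every $\gamma$. For the trivial character this says $f_1$ is $T$-invariant, hence constant by ergodicity of $\mu$. For a nontrivial $\gamma$, taking moduli and using $|\gamma(\phi(x))|=1$ shows $|f_\gamma|$ is $T$-invariant, hence constant; so either $f_\gamma\equiv 0$ or, after normalizing $|f_\gamma|\equiv 1$, the function $h=\overline{f_\gamma}$ solves $\gamma(\phi(x))=h(Tx)/h(x)$. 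Therefore $\mu\times m$ fails to be ergodic exactly when such a nonconstant invariant $F$ exists, that is, exactly when $\gamma(\phi(x))=h(Tx)/h(x)$ admits a measurable unimodular solution for some nontrivial $\gamma$; for $G=\mathbb{S}$, whose nontrivial characters are $z\mapsto z^k$ with $k\ne 0$, this is precisely the stated equation $\phi(x)^k=h(Tx)/h(x)$.

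I expect the main obstacle to be the second paragraph, namely upgrading ergodicity to unique ergodicity. The delicate points are justifying that the ergodic $G$-translates are pairwise mutually singular and that an ergodic measure admits no nontrivial decomposition into such translates, which is exactly where the commuting $G$-symmetry does the essential work; the Fourier computation of the third paragraph is then routine, once the constancy of $|f_\gamma|$ has been extracted from the ergodicity of the base.
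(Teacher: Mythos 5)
Your proof is correct, but note that the paper contains no proof of this lemma at all: it is quoted from Furstenberg's 1961 paper (p.~579), with only the parenthetical remark that the ``only if'' part is obvious. So the comparison is with Furstenberg's classical argument rather than with anything in the text, and your argument is essentially that classical one. The second paragraph is the standard symmetry argument: average an arbitrary ergodic $S$-invariant measure $\nu$ over the commuting fibre rotations $R_g$, identify $\int_G (R_g)_*\nu\,dm(g)$ with $\mu\times m$ using unique ergodicity of the base together with translation invariance of Haar measure, and then invoke the uniqueness of the ergodic decomposition of the ergodic measure $\mu\times m$ to force $m$-a.e.\ translate $(R_g)_*\nu$ to equal $\mu\times m$. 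Your detour through the stabilizer $H$ and mutual singularity is a little more elaborate than necessary --- once one translate $(R_g)_*\nu$ equals the translation-invariant measure $\mu\times m$, applying $R_{g^{-1}}$ already gives $\nu=\mu\times m$ --- but it is sound. The third paragraph is the routine fibre-Fourier computation, and it correctly supplies the two details the statement leaves implicit: the solution $h$ must be unimodular (this follows from the constancy of $|f_\gamma|$, which uses ergodicity of $\mu$), and the displayed equation $\phi(x)^k=h(Tx)/h(x)$ is the specialization to $G=\mathbb{S}$ of the general criterion that $\gamma\circ\phi$ be a multiplicative coboundary for no nontrivial character $\gamma\in\widehat{G}$. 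For $G=\mathbb{S}^p$, which is the case actually invoked in Lemma \ref{Ext-Erg2}, the criterion reads $\prod_i\phi_i(x)^{k_i}=h(Tx)/h(x)$ with $(k_1,\dots,k_p)\neq 0$; this is consistent with the Fourier computation carried out there, where only the first equivalence of the lemma is used. I see no gaps.
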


 Let $(X, T)$ be a topological dynamical system and let $p\ge 1$ be an integer. Suppose $$
 \phi_1: X \to \mathbb{S}^1,\ \ \  \phi_2: X \times   \mathbb{S}^1 \to \mathbb{S}^1,  \ \ \cdots, \ \ 
 \phi_p: X \times   \mathbb{S}^{p-1} \to \mathbb{S}^1$$ are  given continuous maps. Then $S_1: X\times \mathbb{S} \to X\times \mathbb{S}$
 defined by
 $$
        S_1(x, z_1) = (Tx, \phi_1(x) z_1)
 $$
 is a group extension of $(X, T)$, and $S_2: X\times \mathbb{S}^2 \to X\times \mathbb{S}^2$
 defined by
 $$
        S_2(x, z_1, z_2) = (Tx, \phi_1(x) z_1, \phi(x, z_1)z_2)
 $$
  is a group extension of $(X\times \mathbb{S}^1, S_1)$. Inductively, we define $S_3, \cdots, S_p$ such that 
  $S_{j+1}$ is a group extension of $S_j$. In particular, the map
  $S_p: X\times \mathbb{S}^p \to X\times \mathbb{S}^p$ is
 defined by
 $$
        S_p(x, z_1, \cdots, z_p) = (Tx, \phi_1(x) z_1, \phi(x, z_1)z_2, \cdots, \phi_{p}(x, z_1, \cdots, z_{p-1}) z_p).
 $$
 We call it  a $p$-th group extension of $(X, T)$.
 
 Let us consider the following  special case
 $$
     \phi_1(x)= \gamma(x), \ \ \phi(x, z_1) = z_1, \ \cdots,  \  \phi_p(x, z_1, \cdots, z_{p-1})=z_{p-1}
 $$
 where $\gamma : X \to \mathbb{S}^1$ is a given continuous map.

 Let us consider the following special case where $G=\mathbb{S}^p$ ($p\ge 1$) and 
 \begin{equation}\label{phip}
      S(x, z_1, \cdots, z_p) = (Tx, \gamma(x) z_1,  z_1z_2, \cdots,  z_{p-1} z_p).
 \end{equation}
 with $\gamma : X \to \mathbb{S}$ a continuous map. 
 
 For any given eigenfunction $\tilde{\gamma} \in G_1(T)$, we will find a number $\lambda\in \mathbb{S}^1$
 such that the $p$-th extension $S$ defined (\ref{phip})  with $\gamma=\lambda \tilde{\gamma}$ have the following property: whenever
 $T^n$ is uniquely ergodic, so is $S^n$. 
 By Lemma \ref{lem_Furs}, it suffices to deduce the ergodicity of $\mu\times m$ relative to $S^n$ from the ergodicity of $\mu$ relative to $T^n$.
 
 
 
  \begin{lem} \label{Ext-Erg2} Let $(X, T)$ be uniquely ergodic with invariant measure $\mu$ and let $p\ge 1$  be integer.
  For any eigenfunction $\widetilde{\gamma} \in G_1(T)$, there exists a number $\lambda \in \mathbb{S}$  such that
  the extension $S$ on $X \times \mathbb{S}^p$ of $(X, T)$ 
 defined by (\ref{phip}) with  $\gamma:= \lambda\widetilde{\gamma} \in G_1(T)$ has the following properties:\\
 \indent \mbox{\rm (1)} $S$ is uniquely ergodic. \\
  \indent \mbox{\rm (2)} for any integer $n\ge 1$, $S^n$ is uniquely ergodic if $T^n$ is uniquely ergodic.
 \end{lem}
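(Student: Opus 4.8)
The plan is to reduce unique ergodicity to ergodicity of the product measure $\mu\times m$ (in the spirit of Furstenberg's Lemma \ref{lem_Furs}) and then to verify the latter by a Fourier analysis in the fibre $\mathbb{S}^p$, saving the choice of $\lambda$ for the very end so as to kill countably many resonances at once. Writing $\gamma(x)=e^{2\pi i g(x)}$ and passing to additive fibre coordinates $z_j=e^{2\pi i t_j}$, the map (\ref{phip}) becomes an affine \emph{unipotent} extension $S(x,\mathbf t)=(Tx,\mathbf v(x)+M\mathbf t)$, where $\mathbf v(x)=g(x)e_1$ and $M$ is the lower-triangular unipotent matrix with $1$'s on the diagonal and on the first sub-diagonal. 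Hence $S^n(x,\mathbf t)=(T^n x,\mathbf v^{(n)}(x)+M^n\mathbf t)$ with $\mathbf v^{(n)}=\sum_{i=0}^{n-1}M^{\,n-1-i}\,\mathbf v\circ T^i$. Since $M^i e_1$ has first coordinate $1$, the first coordinate of the cocycle is $v^{(n)}_1(x)=\sum_{i=0}^{n-1}g(T^i x)$, i.e. $\prod_{i=0}^{n-1}\gamma(T^i x)=\gamma(x)^n\alpha^{n(n-1)/2}$, where $\alpha$ is the eigenvalue of $\widetilde\gamma$ (so $T\gamma=\alpha\gamma$ as well). This identity, a direct consequence of $\gamma$ being an eigenfunction, is the arithmetic heart of the matter.

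I would first prove, for every $n$ such that $\mu$ is $T^n$-ergodic, that $\mu\times m$ is $S^n$-ergodic. Expand an $S^n$-invariant $F\in L^2(\mu\times m)$ as $F=\sum_{\mathbf k\in\mathbb{Z}^p}F_{\mathbf k}(x)\,\chi_{\mathbf k}(\mathbf z)$ with $\chi_{\mathbf k}(\mathbf z)=z_1^{k_1}\cdots z_p^{k_p}$. Invariance forces $F_{\mathbf m}(x)=F_{\mathbf k}(T^n x)\,e^{2\pi i\langle\mathbf k,\mathbf v^{(n)}(x)\rangle}$ with $\mathbf k=((M^n)^{T})^{-1}\mathbf m$. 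Taking moduli and using $T^n$-invariance of $\mu$, the norm $\|F_{\mathbf m}\|_2$ is constant along every orbit of $(M^n)^{T}$ acting on $\mathbb{Z}^p$. Now $(M^n)^{T}$ is unipotent and fixes exactly the line $\langle e_1\rangle$, while for $\mathbf m\notin\langle e_1\rangle$ the orbit is infinite; square-summability then forces $F_{\mathbf m}=0$. Thus $F=\sum_{m_1}F_{m_1 e_1}(x)\,z_1^{m_1}$, and because $e_1$ is fixed the surviving relation is $F_{m_1 e_1}(x)=F_{m_1 e_1}(T^n x)\,[\gamma(x)^n\alpha^{n(n-1)/2}]^{m_1}$. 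For $m_1=0$ this makes $F_0$ constant by $T^n$-ergodicity; for $m_1\ne0$ a nonzero solution exists if and only if the unimodular function $c_{n,m_1}(x):=\gamma(x)^{nm_1}\alpha^{n(n-1)m_1/2}$ is a $T^n$-coboundary.

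It then remains to choose $\lambda\in\mathbb{S}$ so that no $c_{n,m_1}$ (with $n\ge1$, $m_1\ne0$, $T^n$ ergodic) is a $T^n$-coboundary. Using $\gamma^{nm_1}=\lambda^{nm_1}\widetilde\gamma^{nm_1}$ and the continuity of $\widetilde\gamma^{nm_1}$, the function $c_{n,m_1}$ is a coboundary exactly when $\widetilde\gamma^{nm_1}$ is cohomologous (for $T^n$) to the constant $\lambda^{-nm_1}\alpha^{-n(n-1)m_1/2}$. If $\widetilde\gamma^{nm_1}$ is cohomologous to no constant, $c_{n,m_1}$ is never a coboundary and imposes no condition; otherwise the constants cohomologous to $\widetilde\gamma^{nm_1}$ form a single coset of the group of measurable $T^n$-eigenvalues, which is countable by ergodicity, so the bad values satisfy $\lambda^{nm_1}\in(\text{a countable set})$, i.e. $\lambda$ ranges over a countable set. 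Taking the union over the countably many admissible pairs $(n,m_1)$, the excluded $\lambda$ form a countable subset of $\mathbb{S}$, and any $\lambda$ avoiding it works simultaneously for all relevant $n$.

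Finally I would assemble the two assertions. For (1) (the case $n=1$, automatic since $T$ is uniquely ergodic), ergodicity of $\mu\times m$ under $S$ descends to each factor $S_j$ (projection onto the first $j$ circles), so climbing the tower of circle extensions and applying Lemma \ref{lem_Furs} at each level yields unique ergodicity of $S=S_p$. For (2), assuming $T^n$ uniquely ergodic, part (1) gives $S$ uniquely ergodic, and the standard averaging argument (if $\rho$ is $S^n$-invariant then $\frac1n\sum_{i<n}(S^i)_*\rho$ is $S$-invariant, hence equals $\mu\times m$; ergodicity of $\mu\times m$ under $S^n$ then forces $\rho=\mu\times m$) promotes the $S^n$-ergodicity already established to unique ergodicity of $S^n$. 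The hard part will be the penultimate step: arranging that a \emph{single} $\lambda$ defeats every resonance $c_{n,m_1}$ at once. This hinges on the precise constant $\alpha^{n(n-1)m_1/2}$ generated by the eigenfunction relation together with the countability of the measurable eigenvalue groups of all the $T^n$, and it is where the freedom in $\lambda$ is genuinely needed.
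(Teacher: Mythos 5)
Your proof is correct and its core is the same as the paper's: expand an $S^n$-invariant function in Fourier series along the fibre, use the measure-preservation of $T^n$ plus square-summability to kill all coefficients except those indexed by multiples of $e_1$, reduce to a cohomological equation involving $\gamma^{nm_1}$ times a power of the eigenvalue, and choose $\lambda$ by a countability argument so that no such equation is solvable for any $n$; unique ergodicity then follows from Furstenberg's criterion (Lemma \ref{lem_Furs}). Three points where your execution differs are worth recording. First, your unipotent-matrix formulation ($S^n(x,\mathbf t)=(T^nx,\mathbf v^{(n)}(x)+M^n\mathbf t)$, with norms of Fourier coefficients constant along the infinite orbits of $(M^n)^{T}$ on $\mathbb{Z}^p\setminus\langle e_1\rangle$) packages in one stroke the explicit exponents $a,b,c_1,\dots,c_p$ that the paper computes by hand. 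Second, and more substantively, you state the resonance condition as a \emph{$T^n$-coboundary} equation and exclude it using the countability of the group of measurable eigenvalues of the ergodic system $(X,\mu,T^n)$; the paper instead invokes the non-solvability of equation (\ref{cobord2}), which is literally a $T$-coboundary equation $h(Tx)/h(x)$, to dispose of what is really the $T^n$-coboundary relation (\ref{RR2b}) --- your version closes that small gap, at the cost of having to take a countable union over the pairs $(n,m_1)$ rather than over $(k,j)$. Third, for part (2) you upgrade the $S^n$-ergodicity of $\mu\times m$ to unique ergodicity of $S^n$ by the averaging trick (an $S^n$-invariant $\rho$ averages to an $S$-invariant measure, which must be $\mu\times m$, and extremality of the ergodic measure $\mu\times m$ among $S^n$-invariant measures forces $\rho=\mu\times m$), whereas the paper applies Furstenberg's lemma directly to $S^n$ viewed as a tower of circle extensions of the uniquely ergodic $T^n$; both are valid, and yours avoids re-examining the tower structure of $S^n$. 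Your tower-climbing application of Lemma \ref{lem_Furs} in part (1) is also the careful way to use that lemma, since $S_p$ over $(X,T)$ is an affine, not a pure group, extension.
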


\begin{proof}   (1) The proof of this part is contained in \cite{L1993} (pp. 779-780) and we repeat it here for completeness.  We first discuss the following cocycle
equation (\ref{cobord2}), which is also useful for part (2). In \cite{L1993}, only the case $k=0$ was discussed. The general case with  arbitrary $k$ would have been discussed \cite{L1993},
because the powers of  the extension were used.

Let $\xi\in H_1(T)$ be the eigenvalue associated to the eigenfunction $\widetilde{\gamma}$. 
 We introduce a parameter $\lambda \in \mathbb{S}$, to be determined later,  and consider the equation 
 \begin{equation}\label{cobord2}
        \xi^k (\lambda \widetilde{\gamma}(x))^j = \frac{h(Tx)}{h(x)}     \  \ \ \mu\!-\!a.e.
 \end{equation}
 where the unknown is the triple $(k, j, h)$ with $k\in \mathbb{Z}$, $j \in \mathbb{Z}\setminus\{0\}$ and $h: X\to \mathbb{S}$ a Borel function.
 We claim that there exists $\lambda\in \mathbb{S}$ such that (\ref{cobord2}) has no solution. Since $\mathbb{S}$ is uncountable, it suffices to show that for any fixed couple $(k, j)\in \mathbb{Z}
 \times \mathbb{Z}\setminus \{0\}$, there are at most countably many $(\lambda, h)$ such that (\ref{cobord2}) is solvable.   That is really the case.
In fact,
if $(\lambda_1, h_1)$ and $(\lambda_2, h_2)$ are distinct solutions, then
$$
     \lambda_1^j h_1(x) \overline{h_2(x)} = \lambda_2^j h_1(Tx) \overline{h_2(Tx)}.
$$
If furthermore $\lambda_1^j \not=\lambda_2^j$,  then $h_1$ and $h_2$ are orthogonal.  But  any family of orthogonal functions is countable because $L^2(\mu)$ is separable.  
So, there are at most countable many possibilities $\lambda^j$.  To finish the argument, we just remark that $\lambda_1^j =\lambda_2^j$ means $\lambda_2 = \lambda_1e^{2\pi i l/j}$ ($0\le l <j$).

In the following we fix a number $\lambda\in \mathbb{S}$ such that  (\ref{cobord2}) has no solution. Notice that this $\lambda$ depends on $\widetilde{\gamma}$.
Let $\gamma:=\lambda \widetilde{\gamma} \in G_1(T)$.
  Consider  the extension $S$ defined by
  $$  S(x, z_1, z_2, \cdots, z_p)
      = (Tx, \gamma(x)z_1, z_1z_2, \cdots, z_{p-1}z_p) . 
      $$
      
      According to Lemma \ref{lem_Furs}, in order to prove that $S$ is uniquely ergodic, it suffices to prove that $\mu\times m$ is $S$-ergodic.    
Suppose that $f$ is a bounded $S$-invariant function. By a Fourier method, we are going to prove that $f$ is constant. 
For $J:=(j_1, \cdots, j_p) \in \mathbb{Z}^p$, define
$$
      f_J(x) = \int_{\mathbb{S}^p} f(x, z_1, \cdots, z_p) z_1^{j_1}\cdots z_p^{j_p} dz_1 \cdots dz_p.
$$
It is a Fourier coefficient of the function $z \mapsto f(x, z)$.
We make the change of variables $(Z_1, \cdots, Z_p) := (\gamma(x)z_1, z_1z_2, \cdots, z_{p-1}z_p)$, which preserves the Haar measure $dz$, to get
\begin{eqnarray*}
   & & f_J(Tx) = \int_{\mathbb{S}^p} f(Tx, Z_1, \cdots, Z_p) Z_1^{j_1}\cdots Z_p^{j_p} dZ_1 \cdots dZ_p\\
    &=&  \int_{\mathbb{S}^p} f(Tx, \gamma(x) z_1, z_1z_2, \cdots, z_{p-1}z_p)  \gamma(x)^{j_1} z_1^{j_1+j_2}z_2^{j_2+j_3}\cdots z_{p-1}^{j_{p-1}+j_p} z_p^{j_p} dz_1 \cdots dz_p.
\end{eqnarray*}
Thus
\begin{equation}\label{RR}
   f_{j_1, \cdots, j_p}(Tx)   = \gamma(x)^{j_1} f_{j_1+j_2, \cdots, j_{p-1}+j_p, j_p} (x).
\end{equation}
Then, by the $T$-invariance of $\mu$, we get
\begin{equation}\label{Recurve}
        \int_X  | f_{j_1, j_2 \cdots, j_p, j_p} (x)|^2 d\mu(x)  = \int_X  | f_{j_1+j_2, \cdots, j_{p-1}+j_p, j_p} (x)|^2 d\mu(x). 
\end{equation}
On the other hand, notice that
\begin{eqnarray*}
    \sum_{J\in \mathbb{Z}^p} \int_X  | f_{J} (x)|^2 d\mu(x)
    = \int_{X}\int_{\mathbb{S}^p} |f(x, z)|^2 d\mu(x) dz <\infty.
\end{eqnarray*}
It follows that 
\begin{equation}\label{lim_0}
   \lim_{|J|\to \infty} \int_X  | f_{j_1, j_2, \cdots,  j_p} (x)|^2 d\mu(x) =0.
\end{equation}
From (\ref{Recurve}) and (\ref{lim_0}), we deduce that for $\mu$-almost every $x$, $f_J(x)=0$ when at least one of $j_2, \cdots, j_p$ is non zero.  
That is to say,  $f$ depends only on $x$ and $z_1$. 
Write $f_j(x) = f_{j_1, 0, \cdots, 0}(x)$. Then (\ref{RR}) becomes 
\begin{equation}\label{sol}
f_j(Tx) = \gamma(x)^j f_j(x).
\end{equation} So, $|f_j|$ is $T$-invariant by the ergodicity. We claim that $|f_j|=0$. Otherwise, we get a contradiction to the non-solvability of
of the equation (\ref{cobord2}).   Therefore $f$ depends only on $x$. But $\mu$ is $T$-ergodic, so $f$ is almost everywhere constant.
Thus we have proved the $S$-ergodicity of $\mu\times m$, then the unique ergodicity of $S$. 
  
    (2) Recall $T\gamma = \xi \gamma$. We have the following formula
      for the powers of $S$:
      \begin{equation}\label{Sn}
        S^n(x, z_1, \cdots, z_p) = (T^nx, Z_1, \cdots, Z_p)
      \end{equation}
      where
      \begin{eqnarray*}
         Z_1 &=& 
          \xi^{\binom{n}{2}} \gamma(x)^{\binom{n}{1}} z_1;\\
           Z_2 &=& \xi^{\binom{n}{3}} \gamma(x)^{\binom{n}{2}} z_1^{\binom{n}{1}} z_2;\\ 
                   & \vdots &\\
              Z_p &=& \xi^{\binom{n}{p+1}} \gamma(x)^{\binom{n}{p}} z_1^{\binom{n}{p-1}} \cdots z_{p-1}^{\binom{n}{1}} z_p.       
      \end{eqnarray*}
 We can prove the formula (\ref{Sn}) by induction on $n$ using the Pascal formula. Here $\binom{n}{k}=0$ for $k>n$.
 Notice that $S^n$ is a $p$-th extension of $T^n$.
 We have assumed that $T^n$ is uniquely ergodic.
 Again, according to Lemma \ref{lem_Furs}, in order to prove that $S^n$ is uniquely ergodic, we have only to prove that 
 $\mu\times m$ is $S^n$-ergodic.   
 Suppose that $f$ is a bounded $S^n$-invariant function.  
For $J:=(j_1, \cdots, j_p) \in \mathbb{Z}^p$, we also define
$$
      f_J(x) = \int_{\mathbb{S}^p} f(x, z_1, \cdots, z_p) z_1^{j_1}\cdots z_p^{j_p} dz_1 \cdots dz_p.
$$
 Consider $(z_1, \cdots, z_p) \mapsto (Z_1, \cdots, Z_p)$ as a change of variable, which preserves the Haar measure $dz$. We have
\begin{eqnarray*}
   & & f_J(T^nx) = \int_{\mathbb{S}^p} f(T^nx,  Z_1, \cdots, Z_p) Z_1^{j_1}\cdots Z_p^{j_p} dZ_1 \cdots dZ_p\\
    &=&  \int_{\mathbb{S}^p} f(S^n(x,x, \gamma(x) z))  \xi^a \gamma(x)^{b} z_1^{c_1}z_2^{c_2}\cdots  z_p^{c_p} dz_1 \cdots dz_p
\end{eqnarray*}
where 
\begin{eqnarray*}\label{abc}
    a & =& j_1\binom{n}{2} +  j_2\binom{n}{3} +\cdots + j_p\binom{n}{p+1}\\\
    b & = & j_1\binom{n}{1} +  j_2\binom{n}{2}+ \cdots + j_p\binom{n}{p}\\
    c_1& = & j_1\binom{n}{0} +  j_2\binom{n}{1} +\cdots + j_p\binom{n}{p-1}\\
    c_2& = & j_2\binom{n}{0} + j_3 \binom{n}{1}+ \cdots + j_{p}\binom{n}{p-2}\\
         & \vdots & \\
         c_{p-1}& = & j_{p-1}\binom{n}{0} +  j_p\binom{n}{1}\\
         c_p&=& j_p.
\end{eqnarray*}
Thus we get a formula generalizing (\ref{RR})
\begin{equation}\label{RR2}
   f_{j_1, \cdots, j_p}(T^nx)   = \xi^a \gamma(x)^{b} f_{c_1, \cdots, c_p} (x).
\end{equation}
As in the proof of (1), from (\ref{RR2}) we can deduce that $f$ only depends on $x$ and $z_1$.   Let $f_j(x) = f_{j, 0, \cdots, 0}(x)$.
Then (\ref{RR2}) becomes 
\begin{equation}\label{RR2b}
   f_{j}(T^nx)   = \xi^{j n(n-1)/2} \gamma(x)^j f_{j} (x).
\end{equation}
The  non solvability of (\ref{cobord2}) implies that $f_j(x)=0$ for $j \not=0$. Thus $f$ depends only on $x$.  
The $S^n$-invariance of $f$ implies the $T^n$-invariance of $f$. Finally we conclude that $f$ is constant by the $T^n$-ergodicity of $\mu$.
 \end{proof}
 
 Notice that the above choice $\lambda$ is valid for all $n\ge 1$.
 \medskip
 
 The quasi eigenfunctions of the extension $S$ is simply related to those of the base dynamics $T$, as the following lemma shows.

 \begin{lem} [\cite{L1993}, p.782] \label{Eigen}
 Let $S$ be the extension defined by $\gamma \in G_1(T)$. Let $1\le k\le p$.  If $g \in G_k(S)$, there exists $\widetilde{g} \in G_{k+1}(T)$
 and $d_1, \cdots,  d_j \in \mathbb{Z}$ such that
 $$
     g(x, z_1, \cdots, z_p) = \widetilde{g}(x) z_1^{d_1}\cdots z_k^{d_k}.
 $$
 \end{lem}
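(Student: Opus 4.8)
The plan is to argue by induction on $k$, the base case $k=1$ being the instance of the argument below in which $g'$ is a constant in $G_0(S)=H_1(S)$ (so that $\widetilde{g}'$ is a constant in $G_1(T)$). Given $g\in G_k(S)$, the definition of a $k$-th quasi-eigenfunction furnishes $g'\in G_{k-1}(S)$ with $g\circ S=g'\,g$; by the induction hypothesis $g'(x,z)=\widetilde{g}'(x)\,z_1^{e_1}\cdots z_{k-1}^{e_{k-1}}$ for some $\widetilde{g}'\in G_k(T)$ and integers $e_1,\dots,e_{k-1}$. I would then expand $g$ in a Fourier series in the fibre variables, writing $g_A(x)=\int_{\mathbb{S}^p}g(x,z)\,z_1^{a_1}\cdots z_p^{a_p}\,dz$ as in the definition preceding (\ref{RR}). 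Matching Fourier coefficients in the identity $g\circ S=g'g$, exactly as in the computation that produced (\ref{RR}), gives a recursion of the form
\begin{equation*}
 g_{J(A)}(Tx)\,\gamma(x)^{b_1(A)}=\widetilde{g}'(x)\,g_{A-E}(x)\qquad(A\in\mathbb{Z}^p),
\end{equation*}
where $A\mapsto J(A)$ is the unipotent $\mathbb{Z}$-linear automorphism of $\mathbb{Z}^p$ dictated by the skew structure of $S$ (so $J(A)_p=a_p$ and $J(A)_i=a_i-a_{i+1}+\cdots\pm a_p$), $b_1(A)=a_1-a_2+\cdots\pm a_p$ is its first component, and $E=(e_1,\dots,e_{k-1},0,\dots,0)$.

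Since $|g|\equiv 1$ we have $\sum_{A}\|g_A\|_{L^2(\mu)}^2=1<\infty$. Taking moduli in the recursion and integrating against the $T$-invariant measure $\mu$ gives $\|g_{\Theta(C)}\|_{L^2(\mu)}=\|g_C\|_{L^2(\mu)}$, where $\Theta(C):=J(C+E)=J(C)+J(E)$ is an affine \emph{unipotent} bijection of $\mathbb{Z}^p$. Hence a nonzero coefficient $g_C$ forces the whole two-sided orbit $\{\Theta^n(C)\}$ to be finite, i.e.\ $C$ must be $\Theta$-periodic. The key linear-algebra step is that, because $J$ is unipotent, the partial sum $P_N=I+J+\cdots+J^{N-1}$ is invertible over $\mathbb{Q}$; writing $J^N-I=(J-I)P_N$ and cancelling $P_N$ one sees that the $N$-periodic points of $\Theta$ are exactly its fixed points, the integer solutions of $(J-I)C=-J(E)$. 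Solving this triangular system forces $c_{k+1}=\cdots=c_p=0$ and determines $c_2,\dots,c_k$, leaving $c_1$ free; thus the fixed-point set is a single coset $C^{(0)}+\mathbb{Z}e_1$ with $e_1=(1,0,\dots,0)$. Consequently $g$ is supported on this line, so it involves only $z_1,\dots,z_k$ and takes the form $g(x,z)=z^{C^{(0)}}\sum_{t\in\mathbb{Z}}g_{C^{(0)}+te_1}(x)\,z_1^{t}$.

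It remains to collapse the sum over $t$ to a single term. For each fixed point $C$ the recursion reads $g_C\circ T=\bigl[\widetilde{g}'\,\gamma^{-b_1(C+E)}\bigr]g_C$, so $|g_C|$ is a continuous $T$-invariant function, hence constant by the unique ergodicity of $(X,T)$. If two powers $t_{\min}<t_{\max}$ occurred with $g_{C^{(0)}+t_{\min}e_1},\,g_{C^{(0)}+t_{\max}e_1}\not\equiv 0$, then the coefficient of $z_1^{\,t_{\max}-t_{\min}}$ in the expansion of $|g|^2\equiv 1$ would be $g_{C^{(0)}+t_{\max}e_1}(x)\,\overline{g_{C^{(0)}+t_{\min}e_1}(x)}$, a function of constant nonzero modulus that is nonetheless forced to vanish, a contradiction. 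Hence exactly one power $t^\ast$ survives and $g(x,z)=\widetilde{g}(x)\,z_1^{d_1}\cdots z_k^{d_k}$ with $\widetilde{g}:=g_{C^\ast}$ and $C^\ast=C^{(0)}+t^\ast e_1$. Finally the fixed-point equation for $C^\ast$ gives $T\widetilde{g}\cdot\overline{\widetilde{g}}=\widetilde{g}'\,\gamma^{-b_1(C^\ast+E)}$, which lies in $G_k(T)$ since $\widetilde{g}'\in G_k(T)$ and $\gamma\in G_1(T)\subset G_k(T)$; therefore $\widetilde{g}\in G_{k+1}(T)$, closing the induction.

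I expect the main obstacle to be the structural step in the second paragraph: proving that the periodic points of the affine unipotent recursion map $\Theta$ are precisely its fixed points, and that solving $(J-I)C=-J(E)$ automatically annihilates the top coordinates $c_{k+1},\dots,c_p$. This single computation simultaneously explains why $g$ cannot depend on $z_{k+1},\dots,z_p$ and why the surviving exponents are rigidly tied together. Everything else, namely the constancy of $|g_C|$ via unique ergodicity, the extremal-coefficient argument forcing a single monomial, and reading off membership in $G_{k+1}(T)$, is routine.
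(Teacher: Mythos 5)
The paper gives no proof of this lemma --- it is quoted from Lesigne \cite{L1993}, p.~782 --- so there is nothing internal to compare against; your overall strategy (Fourier expansion in the fibre variables, the unipotent recursion on the coefficients, $L^2$-square-summability forcing the support onto the periodic points of the affine unipotent map, and the identification of those periodic points with the fixed-point line $C^{(0)}+\mathbb{Z}e_1$, which kills $z_{k+1},\dots,z_p$) is sound and is essentially the intended argument. The step you flagged as the ``key linear-algebra step'' is fine: $P_N=I+J+\cdots+J^{N-1}$ is indeed invertible over $\mathbb{Q}$ because $J$ is unipotent, and the triangular system $(J-I)C=-J(E)$ does force $c_{k+1}=\cdots=c_p=0$ with $c_1$ free.

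The genuine gap is in the final collapse to a single monomial. Your extremal-coefficient argument presupposes that the set $\{t:\ g_{C^{(0)}+te_1}\not\equiv 0\}$ has a minimum and a maximum. Nothing you have established gives this: the moduli $c_t=|g_{C^{(0)}+te_1}|$ are constants with $\sum_t c_t^2=1$, which is perfectly compatible with infinitely many nonzero $c_t$, and for an unbounded support the difference $t_{\max}-t_{\min}$ does not exist and no difference need be realized by a unique pair. The gap is not cosmetic, because as written your final step uses no property of $\gamma$ beyond $|\gamma|=1$, whereas the conclusion is actually false for a general $\gamma\in G_1(T)$: take $\gamma\equiv 1$ and $p=1$, so $S(x,z_1)=(Tx,z_1)$; then every continuous modulus-one function $G(z_1)$ with infinitely many nonzero Fourier coefficients lies in $G_1(S)$ and is not a monomial. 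The missing ingredient is precisely the hypothesis under which the lemma is applied in the paper, namely that $\gamma=\lambda\widetilde{\gamma}$ is chosen as in Lemma~\ref{Ext-Erg2} so that equation (\ref{cobord2}) has no solution with $j\neq 0$ (equivalently, that $S$ is uniquely ergodic). With that in hand the step closes cleanly and without any boundedness assumption: if $t\neq s$ both survived, then your own fixed-point relation gives
\begin{equation*}
\bigl(g_{C^{(0)}+te_1}\,\overline{g_{C^{(0)}+se_1}}\bigr)\circ T=\gamma^{\,s-t}\,\bigl(g_{C^{(0)}+te_1}\,\overline{g_{C^{(0)}+se_1}}\bigr),
\end{equation*}
so that (after normalizing by the constant nonzero modulus) $\gamma^{\,s-t}$ is a multiplicative coboundary of a continuous modulus-one function, i.e.\ (\ref{cobord2}) is solvable with $j=s-t\neq 0$ --- a contradiction. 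Equivalently, $\overline{g_{C^{(0)}+te_1}\,\overline{g_{C^{(0)}+se_1}}}(x)\,z_1^{\,t-s}$ would be a non-constant continuous $S$-invariant function. Replace the extremal argument by this observation and the proof is complete; everything else in your write-up is correct.
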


\section{TWWT-Proof of Theorem A}
The first two results below concerning  Topological Wiener-Wintner Theorem (TWWT) constitute the first two steps towards  the proof by induction of Theorem A. 
For their proofs, we don't need the results in Section 2 and Section 3. The proofs given here are adapted from Lesigne \cite{L1993} who treated measure-preserving systems instead of topological
systems.  Another argument used in the proof of Theorem A is due to Frantzikinakis \cite{Frantz2006} (see Proposition \ref{PropF}).
 
 \subsection{Orthogonality to polynomials of degree $1$ }

The following Theorem \ref{order1}  is mainly due to Assani \cite{Assani1993} (see also \cite{Assani}, p. 42). 
A particular case of Robinson's theorem \cite{Robinson} asserts that the limit in (\ref{lim_2}) is uniform on $x$ for fixed $\alpha$.
As pointed out in \cite{Robinson}, B. Weiss obtained some unpublished similar results. 
We  first give a direct of the pointwise convergence of (\ref{lim_2}),  based on the Krylov-Bogolioubov theorem and
the inequality of Van der Corput and Herglotz theorem (through  the spectral measure). Then we prove (\ref{lim_3}) as a
 Robinson's uniform consequence mentioned above and of a technique due to Frantzikinakis \cite{Frantz2006}. This technique of Frantzikinakis
will be used once more in a more involved way in the proof of Proposition \ref{PropF} where polynomials, in stead of $n\alpha$, are concerned.

\begin{thm}\label{order1}
Let $(X,T)$ be a uniquely ergodic system with the unique ergodic measure $\mu$. Suppose that $E_0(T) = G_0(T)$. Let $f \in C(X)$ and  $\alpha \in [0, 1)$.\\
\indent \mbox{\rm (1)} If $e^{2\pi i \alpha } \in E_0(T)$, we have
\begin{equation}\label{lim_1}
  \forall x \in X, \lim_{N\to \infty } \frac{1}{N}\sum_{n=0}^{N-1} e^{2\pi i n \alpha} f(T^n x) = g(x)^{{-1}}\int f g d\mu
\end{equation}
where $g$ is an eigenfunction associated to $e^{2\pi i \alpha}$ (unique up to multiplicative constant).\\
\indent \mbox{\rm (2)} If $e^{2\pi i \alpha } \not\in E_0(T)$, the limit in (\ref{lim_1}) is zero.\\
\indent  \mbox{\rm (3)} We have $f \in E_1(T)^\perp$ if and only if 
\begin{equation}\label{lim_2}
 \forall \alpha \in [0,1),  \forall x \in X, \lim_{N\to \infty } \frac{1}{N}\sum_{n=0}^{N-1} e^{2\pi i n \alpha} f(T^n x) = 0.
\end{equation}
\indent  \mbox{\rm (4)} 
We have $f \in E_1(T)^\perp$ if and only if 
\begin{equation}\label{lim_3}
  \lim_{N\to \infty } \sup_{\alpha \in \mathbb{R}}\left\|\frac{1}{N}\sum_{n=0}^{N-1} e^{2\pi i n \alpha} f(T^n \cdot)\right\|_{C(X)} = 0.
\end{equation}

\end{thm}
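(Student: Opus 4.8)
The plan is to handle the four assertions in order, using a single Van der Corput computation as the engine for parts (2) and (4). For part (1), I would exploit the hypothesis $E_0(T)=G_0(T)$ to fix a \emph{continuous} eigenfunction $g\in C(X)$ with $g\circ T=e^{2\pi i\alpha}g$. Transitivity forces the continuous $T$-invariant function $|g|$ to be constant, so after normalization $|g|\equiv 1$ and $e^{2\pi i n\alpha}=g(T^nx)/g(x)$ for every $n$ and every $x$. This collapses the weighted sum into an ordinary Birkhoff average, $\frac1N\sum_{n<N}e^{2\pi i n\alpha}f(T^nx)=g(x)^{-1}\frac1N\sum_{n<N}(fg)(T^nx)$, and since $fg\in C(X)$ the uniform ergodic theorem for uniquely ergodic systems (Oxtoby) gives convergence to $g(x)^{-1}\int fg\,d\mu$ uniformly in $x$, which is (1).

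For part (2) I would write $u_n=e^{2\pi i n\alpha}f(T^nx)$ and apply Van der Corput's inequality with a parameter $H$. The lag-$h$ cross term is $\frac1N\sum_n u_{n+h}\overline{u_n}=e^{2\pi i h\alpha}\,\frac1N\sum_n(T^hf\cdot\bar f)(T^nx)$, and unique ergodicity sends the Birkhoff average to $c_h:=\langle T^hf,f\rangle=\int T^hf\cdot\bar f\,d\mu$. By Herglotz's theorem $c_h$ is a Fourier coefficient of the spectral measure $\sigma_f$ of $f$, so the Fejér weights built into Van der Corput turn the bound into $\limsup_N|\cdots|^2\le\sigma_f(\{\mp\alpha\})$ as $H\to\infty$, i.e.\ the mass of a single atom. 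Since the set of eigenvalues is a group closed under inversion, when $e^{2\pi i\alpha}\notin E_0(T)$ the measure $\sigma_f$ carries no atom there and the limit is $0$.

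For part (3) I would combine (1) and (2). If $f\in E_1(T)^\perp$, then $\int fg\,d\mu=\langle f,\bar g\rangle=0$ for every eigenfunction $g$ (the conjugate $\bar g=g^{-1}$ is again an eigenfunction), so the limit vanishes for every $\alpha,x$ by (1) and (2). Conversely, if the limit is $0$ for all $\alpha,x$, then for any measurable eigenvalue $e^{2\pi i\alpha}\in E_0(T)=G_0(T)$ there is a continuous eigenfunction $g$, and ergodicity makes eigenfunctions for a fixed eigenvalue unique up to a constant; hence the formula in (1) forces $\int fg\,d\mu=0$, giving $f\perp E_1(T)$. The coincidence $E_0=G_0$ is used exactly here, to make the pointwise formula (1) available for every measurable eigenfunction.

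Part (4) is where the real difficulty lies. The direction $\eqref{lim_3}\Rightarrow f\in E_1(T)^\perp$ is immediate since \eqref{lim_3} specializes to \eqref{lim_2}. For the uniformity I would rerun the Van der Corput estimate of part (2) but take $\sup_\alpha$ \emph{before} passing to the limit. The decisive observation (Frantzikinakis' idea) is that the lag-$h$ correlation $S_h(x):=\frac1N\sum_n(T^hf\cdot\bar f)(T^nx)$ does not depend on $\alpha$, so $\mathrm{Re}\big(e^{2\pi i h\alpha}S_h(x)\big)\le|S_h(x)|$ yields a bound on $\sup_\alpha|A_N(\alpha,x)|^2$ that is entirely free of $\alpha$. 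The uniform-in-$x$ input that Robinson's theorem isolates then makes $S_h\to c_h$ uniformly in $x$, giving $\limsup_N\sup_x\sup_\alpha|A_N|^2\le \frac{C\|f\|_\infty^2}{H}+\frac{2}{H}\sum_{h=1}^H|c_h|$. Finally $f\in E_1(T)^\perp$ means $\sigma_f$ is non-atomic, so Wiener's lemma gives $\frac1H\sum_{h\le H}|c_h|^2\to0$, whence $\frac1H\sum_{h\le H}|c_h|\to0$ by Cauchy--Schwarz, and letting $H\to\infty$ proves \eqref{lim_3}. I expect the crux to be precisely this interchange: recognizing that all $\alpha$-dependence lives in unimodular prefactors multiplying $\alpha$-independent correlations is what converts the per-$\alpha$ estimate into a genuinely uniform one.
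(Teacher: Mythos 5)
Your proof is correct. Parts (1)--(3) follow essentially the same path as the paper: the continuous eigenfunction collapses the weighted sum to a Birkhoff average of $fg$ (part 1), Van der Corput plus the spectral measure and the absence of an atom at $-\alpha$ give part (2), and part (3) is the formal combination. The only genuine divergence is in part (4). The paper does not rerun Van der Corput there; it instead quotes Robinson's theorem (uniformity in $x$ for each fixed $\alpha$) as a black box and upgrades it to uniformity in $\alpha$ via Frantzikinakis' Lemma 2.2, i.e.\ the block decomposition $\frac1N\sum_{n\le N}e^{2\pi in\alpha}b_n \approx \frac{1}{[N/M]}\sum_n\frac1M\sum_m e^{2\pi im\alpha}b_{Mn+m}$ applied to $b_n=f\circ T^n$ in the Banach space $C(X)$. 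You argue directly: since $u_{n+h}\overline{u_n}=e^{2\pi ih\alpha}\,f(T^{n+h}x)\overline{f(T^nx)}$ carries all its $\alpha$-dependence in a unimodular prefactor, taking absolute values of the lag-$h$ correlations yields an $\alpha$-free bound $\frac{\|f\|^2}{H+1}+\frac{2}{H+1}\sum_{h\le H}|c_h|$, uniform in $x$ by unique ergodicity (this uniform convergence of $S_{h,N}$ is Oxtoby's theorem for uniquely ergodic systems rather than Robinson's theorem, a harmless mislabel on your part), and $f\in E_1(T)^\perp$ makes $\sigma_f$ non-atomic so Wiener's lemma and Cauchy--Schwarz kill $\frac1H\sum_{h\le H}|c_h|$. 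This is the classical Bourgain--Assani argument; it is more self-contained than the paper's route and makes transparent exactly where orthogonality to eigenfunctions enters. What the paper's heavier machinery buys is reusability: for polynomial phases of degree $k\ge2$ the Van der Corput correlations retain dependence on the polynomial coefficients, so the ``take absolute values and forget $\alpha$'' trick fails, and it is precisely the Lemma 2.2 mechanism that the paper recycles in Proposition \ref{PropF}. For the degree-one statement at hand, both arguments are complete and correct.
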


\begin{proof}  (1) Let $\lambda=e^{2\pi i \alpha}$.  Assume $g(Tx) = \lambda g(x)$. Then by the unique ergodicity, the limit in (\ref{lim_1}) is uniform on $x$ and is  equal to 
$$
    \lim_{N\to \infty } \frac{1}{N}\sum_{n=0}^{N-1} g(x)^{-1}g(T^n x) f(T^n x) = g(x)^{-1} \int f g d\mu.
$$
Thus we have checked (1). 

(2) We follow Lesigne \cite{L1993}  by using the inequality of Van der Corput. In the present topological case, Krilov-Bogoliubov theorem
will be used in the place of Birkhoff ergodic theorem.  First remark that the unique ergodicity
implies that for any $x\in X$ and any $h \in \mathbb{N}$, we have
\begin{equation}\label{lim-h}
 \lim_{N\to \infty } \frac{1}{N}\sum_{n=0}^{N-1}  f(T^{n+h} x) \overline{f(T^n x)} = \int f\circ T^h\cdot  \overline{f} d\mu = \int_{\mathbb{T}} e^{2\pi i h t} d\sigma(t)
 \end{equation}
 where $\sigma$ is the spectral measure associated to $f$. Let $0\le H <N$. By the inequality of Van der Corput, we have
 \begin{eqnarray*}
     & & \left| \frac{1}{N}\sum_{n=0}^{N-1} e^{2\pi i n \alpha} f(T^n x) \right|^2\\
     & \le &  \frac{N+H}{N(H+1)}\cdot \frac{1}{N} \sum_{n=0}^{N-1}  |f(T^n x)|^2 
      + 2 \frac{N+H}{N(H+1)^2} \\
    & & \ \ \ \  \times \left| \sum_{h=1}^{H} \frac{(H+1-h)(N-h)}{N} \cdot e^{2\pi i h \alpha} \cdot \frac{1}{N-h} \sum_{n=0}^{N-h-1} f(T^{n+h} x) \overline{f(T^n x)}\right| 
 \end{eqnarray*}
 Then taking limit as $N$ tends to infinity leads to
 \begin{eqnarray*}
    \limsup_{N\to \infty} \left| \frac{1}{N}\sum_{n=0}^{N-1} e^{2\pi i n \alpha} f(T^n x) \right|^2\
      &\le &   \frac{\langle f, f\rangle}{H+1} \\
      & & \hspace{-6em} +
      \left| \int_{\mathbb{T}}\left[  \frac{2}{(H+1)^2} \sum_{h=1}^{H} (H+1-h) \cdot e^{2\pi i h (\alpha +t)} \right] d \sigma(t) \right|. 
 \end{eqnarray*}
 Here we have used (\ref{lim-h}).
Since 
$$
    \lim_{H\to \infty} \frac{1}{H}\sum_{h=1}^H e^{2\pi i (\alpha +t)} = 0 \ \ \ \mbox{\rm if} \ \ \alpha + t \not\in \mathbb{Z},
$$
as the second order C\'esaro mean we have 
\begin{equation}\label{lim-h2}
    \lim_{H\to \infty} \frac{1}{(H+1)^2}\sum_{h=1}^H (H+1 -h) e^{2\pi i (\alpha +t)} = 0 \ \ \ \mbox{\rm if} \ \ \alpha + t \not\in \mathbb{Z}.
\end{equation}
On the other hand, since $e^{2\pi i \alpha} \not\in E_0(T)$, we have  $e^{-2\pi i \alpha} \not\in E_0(T)$ too. So, the measure $\sigma$ have no measure at $t = -\alpha$
and the limit in (\ref{lim-h2}) is $\sigma$-almost everywhere equal to $0$. Finally we can conclude for (2) by using the dominated convergence theorem of Lebesgue.

(3) is a direct consequence of (1) and (2).

(4) Because of (3), we have only to prove the uniform convergence (\ref{lim_3}) for $f \in E_1(T)^\perp$. 
Suppose that for any $\alpha \in [0,1]$
we have
\begin{equation}\label{F1}
         \lim_{M\to \infty} \limsup_{N\to \infty} \frac{1}{N} \sum_{n=1}^N \left\| \frac{1}{M}\sum_{m=1}^M e^{2\pi i m \alpha} b_{Mn +m}\right\|_{\mathbb{B}}=0.
\end{equation} 
Then
\begin{equation}\label{F2}
          \lim_{N\to \infty} \sup_{\alpha \in [0,1]} \frac{1}{N} \left\| \sum_{n=1}^N e^{2\pi i n \alpha} b_{n}\right\|_{\mathbb{B}}=0.
\end{equation}   
This is Lemma 2.2 in \cite{Frantz2006}, where $b_n$'s are complex numbers. But the proof is identical when $b_n$'s are in a Banach space $\mathbb{B}$.
Now notice that 
$$
   \left\|\frac{1}{M}\sum_{m=1}^M  e^{2\pi i m \alpha} f(T^{Mn +m} x) \right\|_{C(X)}\le  \left\|\frac{1}{M}\sum_{m=1}^M  e^{2\pi i m \alpha} f(T^{m} x)  \right\|_{C(X)}.
$$
The RHS in the above inequality tends to zero by Robinson's theorem. Thus the condition in (\ref{F1}) is satisfied by 
$b_n = f\circ T^n$ in the Banach space $(C(X), \| \cdot\|_{C(X)})$. Then we get $(\ref{F2})$ with $b_n = f\circ T^n$ and $\mathbb{B}=C(X)$.
This is what we have to prove.
\end{proof}

Theorem \ref{order1} asserts the existence of the limit  for every $x\in X$, which is even uniform on $x$, but under 
the imposed condition $E_0(T) = G_0(T)$. This condition  cannot been dropped. In fact, Robinson showed that there is a strictly ergodic analytic Anzai skew product  $T$ on the torus $\mathbb{T}^2$,
which has an essentially discontinuous eigenvalue (i.e. $E_0(T) \setminus G_0(T)\not= \emptyset$), and for such an eigenvalue $e^{2\pi i \alpha}$
and for some $f\in C(\mathbb{T}^2)$ the limit in (\ref{lim_1}) fails to exist for some point $x\in \mathbb{T}^2$ (Proposition 3.1 in \cite{Robinson}). 

 \subsection{Orthogonality to polynomials of degree $k$ }
Let 
$$D(T) :=\{ \alpha \in [0, 1): \exists m \in \mathbb{Z}\setminus \{0\}, e^{2\pi i m \alpha} \in E_0(T)\}.$$
The set $D(T)$ represents the roots of eigenvalues of $T$.

\begin{thm}\label{orderk}
Let $(X,T)$ be a uniquely ergodic system with the unique ergodic measure $\mu$. Suppose that $E_0(T) = G_0(T)$. Let $f \in C(X)$ and  $\alpha \in [0, 1)$.
 If $\alpha  \not\in D(T)$, then for $k \ge 1$
 \begin{equation}\label{lim_k}
  \forall x \in X, \ \ \ \lim_{N\to \infty } \sup_{P \in \mathbb{R}_{k-1}[t]}\left|\frac{1}{N}\sum_{n=0}^{N-1} e^{2\pi i (n^k \alpha + P(n))} f(T^n x) \right|= 0.
\end{equation}
\end{thm}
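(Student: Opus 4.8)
The plan is to argue by induction on $k$, using the Van der Corput inequality to descend from level $k$ to level $k-1$ exactly as it is used in the proof of Theorem \ref{order1}(2). \emph{Base case $k=1$.} Here $P$ ranges over the constant polynomials $\mathbb{R}_0[t]$, and since $|e^{2\pi i P}|=1$ the quantity inside the supremum equals $\left|\frac1N\sum_{n=0}^{N-1}e^{2\pi i n\alpha}f(T^n x)\right|$, independently of $P$. Taking $m=1$ in the definition of $D(T)$, the hypothesis $\alpha\notin D(T)$ forces $e^{2\pi i\alpha}\notin E_0(T)$, so Theorem \ref{order1}(2) already gives convergence to $0$ for every $x$.

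\emph{Inductive step.} Assume the statement holds at level $k-1$ for every continuous function and every $\beta\notin D(T)$, and fix $k\ge 2$, $x\in X$, and $P\in\mathbb{R}_{k-1}[t]$. Put $u_n=e^{2\pi i(n^k\alpha+P(n))}f(T^n x)$ and apply Van der Corput. The decisive computation is the phase of the correlation $u_{n+h}\overline{u_n}$: since $(n+h)^k-n^k=kh\,n^{k-1}+(\text{terms of degree}\le k-2\text{ in }n)$ and $P(n+h)-P(n)$ has degree $\le k-2$ in $n$, we obtain
\[
u_{n+h}\overline{u_n}=e^{2\pi i\left(kh\alpha\,n^{k-1}+Q_h(n)\right)}\,g_h(T^n x),\qquad g_h:=(f\circ T^h)\cdot\overline{f}\in C(X),
\]
with $Q_h\in\mathbb{R}_{k-2}[t]$. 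Two facts are essential. First, the leading coefficient $kh\alpha$ does \emph{not} depend on $P$; all $P$-dependence is pushed into $Q_h$. Second, $kh\alpha\notin D(T)$: were $e^{2\pi i m\cdot kh\alpha}\in E_0(T)$ for some $m\ne 0$, then $e^{2\pi i(mkh)\alpha}\in E_0(T)$ with $mkh\ne 0$, contradicting $\alpha\notin D(T)$. Hence the induction hypothesis applies with function $g_h$ and $\beta=kh\alpha$, giving, for each fixed $h\ge 1$,
\[
R_{N,h}:=\sup_{Q\in\mathbb{R}_{k-2}[t]}\left|\frac{1}{N-h}\sum_{n=0}^{N-h-1}e^{2\pi i(kh\alpha\,n^{k-1}+Q(n))}g_h(T^n x)\right|\longrightarrow 0\quad(N\to\infty).
\]
Since $R_{N,h}$ is a supremum over all $Q\in\mathbb{R}_{k-2}[t]$, it bounds the correlation average for our particular $Q_h$ uniformly in the choice of $P$.

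Substituting into the Van der Corput inequality, the diagonal term is at most $\|f\|_\infty^2(N+H)/\big(N(H+1)\big)$, while the off-diagonal term is dominated by
\[
\frac{2(N+H)}{N(H+1)^2}\sum_{h=1}^{H}\frac{(H+1-h)(N-h)}{N}\,R_{N,h},
\]
and both bounds are independent of $P$. Letting $N\to\infty$ with $H$ fixed sends the off-diagonal term to $0$ (a finite sum of quantities each tending to $0$), leaving
\[
\limsup_{N\to\infty}\ \sup_{P\in\mathbb{R}_{k-1}[t]}\left|\frac{1}{N}\sum_{n=0}^{N-1}e^{2\pi i(n^k\alpha+P(n))}f(T^n x)\right|^2\le\frac{\|f\|_\infty^2}{H+1}.
\]
Letting $H\to\infty$ yields the desired limit $0$, completing the induction.

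The main obstacle is not the Van der Corput reduction itself but preserving the uniformity over $P\in\mathbb{R}_{k-1}[t]$ after reduction. This is resolved by the observation that differencing sends the $P$-independent top coefficient $kh\alpha$ into the role played by $\alpha$ at level $k-1$, while confining all $P$-dependence to the lower-order polynomial $Q_h$, so that the uniform-in-$Q$ induction hypothesis absorbs it. The only arithmetic input is that the complement of $D(T)$ is closed under $\alpha\mapsto kh\alpha$, which is immediate from $D(T)$ being the set of rational roots of eigenvalues.
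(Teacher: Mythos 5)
Your proof is correct and follows essentially the same route as the paper: induction on $k$ with the Van der Corput inequality pushing all $P$-dependence into the lower-degree polynomial while the $P$-independent leading coefficient $kh\alpha$ (which stays outside $D(T)$) feeds the induction hypothesis applied to $(f\circ T^h)\overline{f}$. Your write-up is in fact slightly more careful than the paper's on two points worth keeping: you verify the full condition $kh\alpha\notin D(T)$ (not merely $e^{2\pi i kh\alpha}\notin E_0(T)$) needed to invoke the induction hypothesis, and you state explicitly that the Van der Corput bounds are uniform in $P$.
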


\begin{proof} Following again Lesigne \cite{L1993} (pp.773-774), we prove it by induction on $k$ using again the inequality of Van der Corput.  The case $k=1$ is (2) of Theorem \ref{order1} (3).
Assume the conclusion for $k\ge 1$. This hypothesis of induction applied to $f\circ T^h \cdot \overline{f}$ gives us
\begin{equation}\label{hyper}
   \lim_{N\to \infty } \sup_{Q \in \mathbb{R}_{k-1}[t]}\left|\frac{1}{N}\sum_{n=0}^{N-1} e^{2\pi i (n^k \alpha + Q(n))} f(T^{n+h} x)\overline{f(T^n x)} \right|= 0.
\end{equation}
The unique ergodicity implies that 
\begin{equation}\label{h2}
   \lim_{N\to \infty } \frac{1}{N}\sum_{n=0}^{N-1} |f(T^n x)|^2 = \langle f, f\rangle.
\end{equation}
Let $0\le H <N$. By the inequality of Van der Corput, we have
 \begin{eqnarray*}
     & & \left| \frac{1}{N}\sum_{n=0}^{N-1} e^{2\pi i (n^{k+1} \alpha +P(n))} f(T^n x) \right|^2\\
     & \le &  \frac{N+H}{N(H+1)}\cdot \frac{1}{N} \sum_{n=0}^{N-1}  |f(T^n x)|^2 
      + 2 \frac{N+H}{N^2(H+1)^2} \sum_{h=1}^H (H+1-h) \\
    & & \ \ \ \  \times \left|   \sum_{n=0}^{N-h-1}  e^{2\pi i ([(n+h)^{k+1} -n^{k+1}] \alpha + [P(n+h)-P(n)])} f(T^{n+h} x) \overline{f(T^n x)}\right|.
 \end{eqnarray*}
 Notice that $d^{\circ} (P(\cdot +h)-P(\cdot))\le k-1$ and 
 $(n+h)^{k+1} - n^{k+1} =(k+1)n^k + R$ with $d^{\circ} R \le k-1$.
 It follows that
  \begin{eqnarray*}
     & & \sup_{P \in \mathbb{R}_k[t]}\left| \frac{1}{N}\sum_{n=0}^{N-1} e^{2\pi i (n^{k+1} \alpha +P(n))} f(T^n x) \right|^2\\
     & \le &  \frac{N+H}{N(H+1)}\cdot \frac{1}{N} \sum_{n=0}^{N-1}  |f(T^n x)|^2 
      + 2 \frac{N+H}{N^2(H+1)^2} \sum_{h=1}^H (H+1-h) \\
    & & \ \ \ \  \times  \sup_{Q \in \mathbb{R}_{k-1}[t]} \left|   \sum_{n=0}^{N-h-1}  e^{2\pi i ((k+1)hn^k \alpha + Q(n))} f(T^{n+h} x) \overline{f(T^n x)}\right| 
 \end{eqnarray*}
 Since $e^{2\pi i(k+1)h\alpha} \not\in E_0(T)$, by (\ref{hyper}) and (\ref{h2}) we get
 $$
    \limsup_{N\to \infty} \sup_{P \in \mathbb{R}_k[t]}\left| \frac{1}{N}\sum_{n=0}^{N-1} e^{2\pi i (n^{k+1} \alpha +P(n))} f(T^n x) \right|^2
    \le \frac{\langle f, f \rangle}{H+1}.
 $$
 Letting $H\to \infty$ finishes the proof by induction.
\end{proof}

\subsection{Frantzikinakis lemma}
The following proposition is another ingredient for proving our Theorem A. It allows us to prove the uniformity on $P \in \mathbb{R}_k[t]$ of the convergence. 
There is a version for totally ergodic measure-preserving systems due to Frantzikinakis \cite{Frantz2006}, and the convergence for individual polynomials is due to Lesigne \cite{L1993}. For the proof for totally uniquely ergodic topological systems, we mimick \cite{Frantz2006}. 
Actually the proof  for our topological systems is 
simpler.  

\begin{prop} \label{PropF}Let $(X, T)$ be a totally uniquely ergodic topological dynamical system and let $f\in C(X)$.
Suppose that for any $\alpha\in \mathbb{R}$ and any $x\in X$ we have
\begin{equation}\label{FF1}
      \lim_{N\to \infty} \sup_{P\in \mathbb{R}_{k-1}[t]} \left|\frac{1}{N} \sum_{n=0}^{N-1} e^{2\pi i (n^k \alpha +P(n))} f(T^n x)\right|=0.
\end{equation} 
Then for any $x$ we have 
\begin{equation}\label{FF2}
      \lim_{N\to \infty} \sup_{P\in \mathbb{R}_{k}[t]} \left|\frac{1}{N} \sum_{n=0}^{N-1} e^{2\pi i P(n)} f(T^n x)\right|=0.
\end{equation} 
\end{prop}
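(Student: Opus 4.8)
The plan is to imitate the proof of Theorem \ref{order1}(4): first produce a ``short average'' hypothesis of the type (\ref{F1}), and then invoke the Frantzikinakis averaging mechanism (\ref{F1})$\Rightarrow$(\ref{F2}), now carrying the full polynomial weight $e^{2\pi i P(n)}$ rather than the linear weight $e^{2\pi i m\alpha}$. The essential new feature, compared with the degree-one case, is that the role played there by Robinson's uniform theorem will now be played by the total unique ergodicity together with the hypothesis (\ref{FF1}).

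First I would fix $x\in X$ and set $b_n=f(T^nx)$, a bounded scalar sequence. Since $n^k\in\mathbb{Z}$, the weight $e^{2\pi i P(n)}$ is unchanged when any coefficient of $P$ is shifted by an integer, so for $P(n)=\alpha n^k+Q(n)$ with $\deg Q\le k-1$ the leading coefficient $\alpha$, as well as the coefficients of $Q$, may be taken in a compact torus. I would then split $n=Mj+m$ with $0\le m<M$ and factor the phase as $P(Mj+m)=P(Mj)+\rho_j(m)$, where $\rho_j(m):=P(Mj+m)-P(Mj)$, viewed as a polynomial in $m$, has degree $k$, leading coefficient $\alpha$, and $\rho_j(0)=0$. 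Writing $b_{Mj+m}=f(T^m(T^{Mj}x))$ and noting that $\rho_j(m)-\alpha m^k$ has degree $\le k-1$, the inner (short) average is dominated by
\[ \Phi_M^{(\alpha)}(y):=\sup_{R\in\mathbb{R}_{k-1}[t]}\Big|\frac1M\sum_{m=0}^{M-1}e^{2\pi i(\alpha m^k+R(m))}f(T^m y)\Big|, \qquad y=T^{Mj}x . \]
Up to a boundary term that vanishes as $N\to\infty$, this expresses $\frac1N\sum_{n<N}e^{2\pi i P(n)}f(T^nx)$ as an average over $j$ of short averages whose phases all have top coefficient $\alpha$.

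The key observation is that $\Phi_M^{(\alpha)}$ is a continuous function on $X$: it is the supremum over the compact coefficient-torus of a function jointly continuous in $(R,y)$, hence continuous in $y$ (and jointly continuous in $(\alpha,y)$). By the hypothesis (\ref{FF1}), for each fixed $\alpha$ and $y$ we have $\Phi_M^{(\alpha)}(y)\to0$ as $M\to\infty$; since $0\le\Phi_M^{(\alpha)}\le\|f\|_{C(X)}$, dominated convergence gives $\int\Phi_M^{(\alpha)}\,d\mu\to0$. Now total unique ergodicity enters: for each $M$ the transformation $T^M$ is uniquely ergodic, so the ergodic averages $\frac1J\sum_{j<J}\Phi_M^{(\alpha)}(T^{Mj}x)$ of the continuous function $\Phi_M^{(\alpha)}$ converge to $\int\Phi_M^{(\alpha)}\,d\mu$ uniformly in $x$. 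This is exactly the short-average input needed to verify the hypothesis of type (\ref{F1}) for each value of the leading coefficient $\alpha$, after which the polynomial analogue of Frantzikinakis' lemma (\ref{F1})$\Rightarrow$(\ref{F2}) — whose proof is identical to the scalar case recalled in Theorem \ref{order1} — upgrades control for each individual $\alpha$ to the uniform control over all $P\in\mathbb{R}_k[t]$ demanded by (\ref{FF2}).

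I expect the \emph{main obstacle} to be precisely the uniformity in the leading coefficient $\alpha$. Bounding the outer average over $j$ by the triangle inequality alone only reduces the problem to showing $\sup_\alpha\int\Phi_M^{(\alpha)}\,d\mu\to0$, which does not follow by soft means: the functions $\alpha\mapsto\Phi_M^{(\alpha)}$ are continuous but fail to be equicontinuous as $M\to\infty$, since a small change in $\alpha$ produces a genuinely degree-$k$ perturbation $(\alpha-\alpha')m^k$ of the phase that cannot be absorbed into the lower-order supremum over $R$. Consequently pointwise convergence in $\alpha$ on the compact torus does not upgrade to uniform convergence by compactness. Retaining the oscillation of the outer phase $e^{2\pi i P(Mj)}$ — which is exactly what the Frantzikinakis averaging lemma exploits — is therefore indispensable, and this is the step that will require the most care.
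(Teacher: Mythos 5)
Your setup coincides with the paper's: the decomposition $n=Mj+m$, the absorption of $\rho_j(m)-\alpha m^k$ into the supremum over $\mathbb{R}_{k-1}[t]$, the continuity of $\Phi_M^{(\alpha)}$ as a supremum over a compact coefficient torus, the use of unique ergodicity of $T^M$ to evaluate the Birkhoff average of $\Phi_M^{(\alpha)}$ along the $T^M$-orbit of $x$, and dominated convergence from the hypothesis (\ref{FF1}) --- this is exactly the paper's ``first claim'' (\ref{FF3}). The gap is in the last step, which you explicitly leave unexecuted (``this is the step that will require the most care'') and for which the mechanism you propose is not the right one. You assert that the Frantzikinakis lemma works by ``retaining the oscillation of the outer phase $e^{2\pi i P(Mj)}$''; it does not. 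In both the scalar lemma and the paper's proof, the outer phase has modulus one and is discarded by the triangle inequality over $j$ --- indeed you already discarded it yourself when you wrote that the long average is ``dominated by'' the average of the $\Phi_M^{(\alpha)}(T^{Mj}x)$. Nothing of the outer oscillation survives to be exploited afterwards, so the uniformity in $\alpha$ cannot come from there.

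The correct resolution, which is the paper's ``second claim'' (\ref{FF4}), is a covering argument in which $M$ is allowed to depend on $\alpha$: for each $\alpha$ and $\epsilon>0$ one first fixes a single $M_\alpha$ such that $\limsup_N\frac{1}{[N/M_\alpha]}\sum_j\Phi_{M_\alpha}^{(\alpha)}(T^{M_\alpha j}x)<\epsilon/3$ (this is the first claim, used at the one value $\alpha$), and then chooses a neighborhood $V_\alpha$ of $\alpha$ so small that $\sup_{0\le m<M_\alpha}\bigl|e^{2\pi i m^k\beta}-e^{2\pi i m^k\alpha}\bigr|\le\epsilon/(3\|f\|_\infty)$ for all $\beta\in V_\alpha$; writing $e^{2\pi i m^k\beta}=\bigl(e^{2\pi i m^k\beta}-e^{2\pi i m^k\alpha}\bigr)+e^{2\pi i m^k\alpha}$ inside the short average splits the estimate into the two pieces $A_{M_\alpha,N}$ and $B_{M_\alpha,N}$ of the paper, each at most $\epsilon/3$. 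Since $M_\alpha$ is \emph{frozen} before $\beta$ is varied, the lack of equicontinuity of $\alpha\mapsto\Phi_M^{(\alpha)}$ as $M\to\infty$ --- which you correctly identify as the obstruction to a soft compactness argument --- is irrelevant; continuity at the single fixed $M_\alpha$ suffices. A finite subcover of $[0,1]$ by the $V_\alpha$ then yields the uniform statement (\ref{FF2}). Your claim that ``$\sup_\alpha\int\Phi_M^{(\alpha)}\,d\mu\to0$'' would be needed is therefore a red herring: no single $M$ has to work for all $\alpha$ simultaneously. Until this $\epsilon$--$V_\alpha$--finite-cover step is written out, the proof is incomplete, since it is precisely the passage from pointwise-in-$\alpha$ to uniform-in-$P$ control that the proposition asserts.
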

 \begin{proof}
     We first claim that for any $\alpha\in \mathbb{R}$ and any $x\in X$ we have
     \begin{equation}\label{FF3}
   \lim_{M\to \infty}    \lim_{N\to \infty} \sum_{n=1}^{N}\frac{1}{N} \sup_{P\in \mathbb{R}_{k-1}[t]}  \left| \frac{1}{M}\sum_{m=0}^{M-1} e^{2\pi i (m^k \alpha +P(m))} f(T^{Mn+m} x)\right|=0.
\end{equation} 
In fact, let
$$
    g_M(\alpha, x) = \sup_{P\in \mathbb{R}_{k-1}[t]} F_M(\alpha, P, x)
    $$
    where
    $$
     F_M(\alpha, P, x) :=  \left| \frac{1}{M}\sum_{m=0}^{M-1} e^{2\pi i (m^k \alpha +P(m))} f(T^{Mn+m} x)\right|.
$$
The function $F_M$ depends only on the fractional parts of the coefficients of $P$. So, each $P\in  \mathbb{R}_{k-1}[t]$ can be identified   as a point in $\mathbb{R}^k/\mathbb{Z}^k$.
The function $F_M$ depends only on the fractal part of $\alpha$ either.
Thus $F_M$ is a continuous function of $(\alpha, P, x) \in \mathbb{R}/\mathbb{Z}\times  \mathbb{R}^k/\mathbb{Z}^k\times X$. It follows that
$g_M $ is a continuous function of $\alpha$ and $x$.   By applying Krilov-Bogoliubov theorem to the system $(X, T^M)$ and to the function $g_M(\alpha, \cdot)$, 
we get
\begin{eqnarray*}
 \lim_{N\to \infty} \sum_{n=1}^{N}\frac{1}{N} \sup_{P\in \mathbb{R}_{k-1}[t]}  \left| \frac{1}{M}\sum_{m=0}^{M-1} e^{2\pi i (m^k \alpha +P(m))} f(T^{Mn+m} x)\right|
 =\int g_M(\alpha, x) d\mu(x)
\end{eqnarray*}
where $\mu$ is the unique invariant measure. The hypothesis (\ref{FF1}) means that $g_M(\alpha, x)$ converges to zero for every point $x$.
This and Lebesgue's bounded convergence theorem allow us to conclude for (\ref{FF3}) if we take limit as $M\to \infty$.

 Fix $\alpha$ and $x$. Secondly we claim that for any $\epsilon >0$, there exist an integer $N_\alpha$ and an open neighborhood $V_\alpha$
 of $\alpha$ (both $N_\alpha$ and $V_\alpha$ depending on $\epsilon$ and $x$ too) such that
 \begin{equation}\label{FF4}
     \forall N >N_\alpha, \quad \sup_{\beta \in N_\alpha} 
      \sup_{P\in \mathbb{R}_{k-1}[t]}  \left| \frac{1}{N}\sum_{n=0}^{N-1} e^{2\pi i (n^k \beta +P(n))} f(T^n x)\right|\le \epsilon.
\end{equation} 
In fact, consider the general term $e^{2\pi i (n^k \beta +P(n))} f(T^n x)$ as a function of $\alpha$ and we denote it by $a_n(\alpha)$. 
For any $\beta$, we have
\begin{eqnarray*}
     \frac{1}{N} \sum_{n=0}^{N-1} a_n(\beta)
    & = & \frac{1}{[N/M]}    \sum_{n=1}^{[N/M]}   \frac{1}{M} \sum_{m=0}^{M-1} a_{nM +m}(\beta) + O(M/N).
    \end{eqnarray*}
where the constant involved in $O(\cdot)$ is $2\|f\|_\infty$. Observe that   $$(M n +m)^k\beta = m^k\beta + P_{M, n, \beta}(m)$$ with $P_{M, n, \beta}\in \mathbb{R}_{k-1}[t]$. 
For any $\beta$ and $\alpha$, we can write
 $$
     e^{2\pi i (Mn+m)^k\beta} = \Big(  e^{2\pi i m^k \beta} -  e^{2\pi i m^k \alpha}\Big) e^{2\pi i P_{M, n, \beta}(m)} 
     + e^{2\pi i (m^k \alpha+ P_{M, n, \beta}(m))}
 $$
 Thus
 \begin{equation}\label{FF5}
     \left|\frac{1}{N} \sum_{n=0}^{N-1} a_n(\beta)\right|
    \le  A_{M, N} (\alpha, \beta)+ B_{M,N}(\alpha, \beta)  + O(M/N)
\end{equation}
where 
\begin{eqnarray*}
    A_{M, N}(\alpha, \beta) & =& \frac{1}{[N/M]}    \sum_{n=1}^{[N/M]}   \frac{1}{M} \sum_{m=0}^{M-1} \left|e^{2\pi i m^k \beta} -  e^{2\pi i m^k \alpha}\right|  |f(T^{Mn+m} x)|\\
    B_{M, N}(\alpha, \beta) &=& \frac{1}{[N/M]}    \left|\sum_{n=1}^{[N/M]}   \frac{1}{M} \sum_{m=0}^{M-1} e^{2\pi i (m^k \alpha+ P_{M, n, \beta}(m) +P(Mn +m))} f(T^{Mn +m} x) \right|.
\end{eqnarray*}
 Now fix $\alpha$. By our first claim (see (\ref{FF3})), there exists an integer $M_\alpha$ such that for $N$ large enough and for all $\beta$ we have
 $$
      |B_{M_\alpha, N}(\alpha, \beta)|\le \frac{1}{[N/M_\alpha]}    \sum_{n=1}^{[N/M_\alpha]}  \sup_{Q\in \mathbb{R}_{k-1}[t]}  \left|\frac{1}{M_\alpha} \sum_{m=0}^{M_\alpha-1} e^{2\pi i (m^k \alpha+ Q(m))} f(T^{M_\alpha n +m} x) 
      \right|\le \frac{\epsilon}{3}.
 $$
 Now we deal with $A_{M_\alpha, N}$.  Choose a neighborhood $V_\alpha$ of $\alpha$ such that
 $$
        \sup_{\beta\in V_\alpha} \sup_{1\le m \le M_\alpha} \left|e^{2\pi i m^k \beta} -  e^{2\pi i m^k \alpha}\right| \le \frac{\epsilon}{3\|f\|}.
 $$
 Then for all $N$ and all $\beta \in V_\alpha$ we have
 $$
       A_{M, N}(\alpha, \beta)\le \frac{\epsilon}{3}.
 $$
 For $N$ large enough we have $2\|f\|M_\alpha/N\le \frac{\epsilon}{3}$. Thus we have proved  (\ref{FF4}). 
 
 We conclude for (\ref{FF2}) from $(\ref{FF4})$ by using a finite covering argument for the compact set $[0,1]$ where $\alpha$ varies. 
 
  \end{proof}

\subsection{TWWT with polynomial weights}

We restate Theorem A as follows.

\begin{thm}\label{OTWWT} Let $(X, T)$ be a uniquely ergodic topological dynamical system and let $k\ge 1$ be an integer.  Suppose
that the invariant measure has $X$ as support and
\\
\indent {\rm  (H1)}  $(X, T^j)$ for $1\le j < \infty$ are all  uniquely ergodic. \\ 
\indent {\rm  (H2)}  $E_j(T) = G_j(T)$ for all all $0\le j \le k$. \\
 For any continuous function $f \in C(T)$, the following assertions are equivalent\\
\indent \mbox{\rm (a) }  $f \in G_k(T)^\perp$;\\
\indent \mbox{\rm (b) }   for  $x\in X$, we have
 \begin{equation}\label{lim_k}
  \lim_{N\to \infty }\sup_{P \in \mathbb{R}_k[t]} \left| \frac{1}{N} \sum_{n=0}^{N-1} e^{2\pi i P(n)} f(T^n x)\right|  =0.
\end{equation}
\end{thm}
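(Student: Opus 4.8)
The plan is to argue by induction on $k$, treating the two implications separately. The base case $k=1$ is exactly Theorem \ref{order1}: by (H2) condition (a) reads $f\in E_1(T)^\perp$, and since a degree-one polynomial $P(n)=\alpha n+\beta$ contributes only the unimodular constant $e^{2\pi i\beta}$, the supremum over $P\in\mathbb{R}_1[t]$ in (b) coincides with the supremum over $\alpha\in\mathbb{R}$ in \eqref{lim_3}. So the base case is free, and the substance is the inductive step $k-1\Rightarrow k$.

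For the easy implication (b)$\Rightarrow$(a) I would use unique ergodicity together with the polynomial structure of quasi-eigenfunctions. If $g\in G_k(T)$, iterating $Tg=g_{k-1}g$ gives $g(T^nx)=g(x)\,e^{2\pi i p_x(n)}$ with $p_x\in\mathbb{R}_k[t]$, exactly as in the proof of the orthogonality Proposition. Since $\operatorname{supp}\mu=X$ and $f,g\in C(X)$, unique ergodicity yields, for every $x$, $\langle f,g\rangle=\lim_N\frac1N\sum_{n}f(T^nx)\overline{g(T^nx)}=\overline{g(x)}\lim_N\frac1N\sum_{n}f(T^nx)e^{-2\pi i p_x(n)}$. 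As $-p_x\in\mathbb{R}_k[t]$ and $|g(x)|=1$, the modulus of the right side is dominated by the quantity in (b), which vanishes; hence $\langle f,g\rangle=0$ and $f\in G_k(T)^\perp$.

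For the hard implication (a)$\Rightarrow$(b), assume $f\in G_k(T)^\perp$. By the Frantzikinakis Proposition \ref{PropF} it suffices to verify its hypothesis \eqref{FF1}, namely $\lim_N\sup_{P\in\mathbb{R}_{k-1}[t]}\big|\frac1N\sum_{n}e^{2\pi i(n^k\alpha+P(n))}f(T^nx)\big|=0$ for every $\alpha$ and every $x$. If $\alpha\notin D(T)$ this is precisely Theorem \ref{orderk}, and no orthogonality is needed. The real work is the exceptional set $\alpha\in D(T)$, where $e^{2\pi i m\alpha}=\xi$ is an eigenvalue for some $m\neq0$ (exactly the $\alpha$ for which the Van der Corput reduction of Theorem \ref{orderk} fails); here I would convert the problem into a group extension. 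Split $n=m\ell+r$ into residue classes mod $m$: on each class the average becomes an average along $(X,T^m)$ (uniquely ergodic by (H1)) of $f$ against a degree-$k$ polynomial weight in $\ell$ with fixed leading coefficient $m^k\alpha$. Using (H2), i.e.\ $E_0=G_0$, choose a continuous eigenfunction $\phi$ of $T$ for $\xi$ and set $\widetilde\gamma=\phi^{\,k!\,m^{k-2}}$, a continuous eigenfunction of $T^m$ whose eigenvalue has argument $k!\,m^k\alpha$; this is calibrated so that, by the iteration formula \eqref{Sn}, the top coordinate of the $(k-1)$-th extension $S$ of $(X,T^m)$ built from $\widetilde\gamma$ via Lemma \ref{Ext-Erg2} picks up under $S^\ell$ a phase of leading coefficient exactly $m^k\alpha$. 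By Lemma \ref{Ext-Erg2}(2) the system $(X\times\mathbb{S}^{k-1},S)$ is totally uniquely ergodic. Lifting $f$ to $F(x,z)=f(x)z_{k-1}$, a Fourier computation in $z$ combined with Lemma \ref{Eigen} and $f\perp G_k(T)=G_k(T^m)$ (Lemma \ref{EigenInv}) shows $F\perp G_{k-1}(S)$, while the measurable analogues of Lemmas \ref{Eigen} and \ref{EigenInv}, together with (H2), show $(X\times\mathbb{S}^{k-1},S)$ inherits (H1) and (H2) up to order $k-1$. Applying the inductive hypothesis to this extension and $F$, then specializing to $y=(x,1,\dots,1)$ and absorbing the residual $\widetilde\gamma(x)$-phase (of degree $k-1$ in $\ell$) into the free polynomial $Q\in\mathbb{R}_{k-1}[t]$, gives decay along each residue class and hence \eqref{FF1}.

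The main obstacle is this exceptional case $\alpha\in D(T)$, and within it two bookkeeping points. First, arranging that the phase produced by the extension has leading coefficient \emph{exactly} $m^k\alpha$: this is what forces the passage to $T^m$, the splitting into arithmetic progressions, and the specific power $\phi^{\,k!\,m^{k-2}}$, and it relies on the precise formula \eqref{Sn} and on the quasi-eigenfunction structure. Second, verifying that the extension again satisfies the standing hypotheses (H1) and (H2) up to order $k-1$, so that the induction closes; this rests on Lemmas \ref{Ext-Erg2}, \ref{Eigen} and \ref{EigenInv} and their measurable counterparts. The remaining non-exceptional frequencies and the final Frantzikinakis packaging are comparatively routine.
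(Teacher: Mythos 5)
Your proposal is correct and follows essentially the same route as the paper: the same treatment of (b)$\Rightarrow$(a) via Krylov--Bogoliubov and the polynomial phase of quasi-eigenfunctions, and for (a)$\Rightarrow$(b) the same induction built on Theorem \ref{orderk} for $\alpha\notin D(T)$, Proposition \ref{PropF} for uniformity, and the extension machinery of Lemmas \ref{Ext-Erg2}, \ref{Eigen}, \ref{EigenInv} for $\alpha\in D(T)$. The only (harmless) deviation is in the exceptional case: the paper builds the extension over $(X,T)$ with the eigenvalue $\eta^{k!}$, obtains the result with leading coefficient $\ell\alpha$, and only afterwards removes the factor $\ell$ by passing to $(X,T^{\ell})$ and averaging over residue classes, whereas you pass to $(X,T^{m})$ and residue classes first and calibrate the eigenfunction ($\phi^{k!\,m^{k-2}}$) so the extension produces leading coefficient $m^{k}\alpha$ directly --- a reordering of the same two steps.
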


\begin{proof} $(b)$ {\it implies} $(a)$:  Let $g \in G_k(T)$. Then there are $g_j \in G_j(T)$ with $g_k = g$ such that
$$
    g_j(T x) = g_{j-1}(x) g_j(x)  \ \ (1\le j\le k). 
$$ Then
$
     g(T^n x) = e^{2\pi i P(n)}
$ with
$$
    P(t) =\sum_{j=0}^k \theta_j \binom{t}{k-j} \in \mathbb{R}_k[t] 
$$
where $e^{2\pi i \theta_j} = g_j(x)$. Therefore, by the Krylov-Bogoliubov theorem and (\ref{lim_k}), we get 
$$
    \int gf d\mu = \lim_{N\to \infty }\frac{1}{N}\sum_{n=0}^{N-1} g(T^n x) f(T^n x)  =0.
$$

 $(a)$ {\it implies} $(b)$:  We prove that (H1), (H2) and (a) imply (b),  by induction on $k$. The case $k=1$ was already proved (see Theorem \ref{order1} (3)). Let $k\ge 2$ and assume that the result   is true for $k-1$.  
 We are going to prove that  (H1), (H2), (a) and the induction hypothesis imply (b). By Theorem \ref{orderk} and Proposition \ref{PropF},
 it suffices to prove that for $\alpha \in D(T)$ and any $f\in G_k(T)^\perp \cap C(X)$ we have
 \begin{equation}\label{AA}
  \forall x \in X, \ \ \ 
      \lim_{N\to \infty }  \sup_{ Q\in \mathbb{R}_{k-1}[t]} \left|\frac{1}{N}\sum_{n=0}^{N-1} e^{2\pi i (n^k \alpha +Q(n))} f(T^n x)  \right|=0.
 \end{equation}

 That $\alpha \in D(T)$ means $\eta: =e^{2\pi i \ell \alpha} \in G_0(T)$ for some integer   $\ell \in \mathbb{Z}\setminus \{0\}$.  Notice that $\xi:=\eta^{k!} \in G_0(T)$. Let   $\widetilde{\gamma} \in G_1(T)$
 be an eigenfunction associated to $\xi$, i.e. 
 $$
     T\widetilde{\gamma}  =  \xi \widetilde{\gamma}.
 $$ 
Let us consider an extension  $(X\times \mathbb{S}^{k-1}, S)$ of $(X, T)$, as that in Lemma \ref{Ext-Erg2} with $p=k-1$,  where 
 $$
     S(x; z_1, \cdots, z_{k-1}) = (Tx; , \gamma(x)z_1, z_1z_2, \cdots, z_{k-2}z_{k-1}),   \quad \gamma = \lambda \widetilde{\gamma}.
 $$
 By Lemma \ref{Ext-Erg2}, we can choose $\lambda\in \mathbb{S}$ such that $S^j$  ($j\ge 1$) are all uniquely ergodic because  $T^j$ ($j\ge 1$) are all assumed uniquely ergodic. That is to say $S$ verifies (H1).
 It is clear that the invariant measure of $S$ has full support.
  By lemma \ref{Eigen}, $S$ verifies (H2) with $k$ replaced by $k-1$, because $T$ verifies (H2) with $k$. Then we can apply the induction hypothesis to $S$ to obtain:
 for any $F\in G_{k-1}(S)^\perp$, we have
 \begin{equation}\label{middle}
     \forall \omega \in X\times \mathbb{S}^{k-1},  \ \ \ \lim_{N\to \infty} \sup_{ P\in \mathbb{R}_{k-1}[t]} \left| \frac{1}{N}\sum_{n=0}^{N-1} e^{2\pi i P(n)} F(S^n \omega) \right|= 0. 
 \end{equation} 
 Choose $F(x, z_1, \cdots, z_{k-1})= f(x) z_{k-1}$. For $\omega =(x, 1,\cdots, 1)$, we have 
 $$
      F(S^n \omega) = f(T^n x) \xi^{\binom{n}{k}} \gamma(x)^{\binom{n}{k-1}}
 $$
 where  we have used the formula (\ref{Sn}) for the expression of $S^n$.
 Since $f\in G_{k}(T)^\perp$, we have $F\in G_{k-1}(S)^\perp$ by Lemma \ref{Eigen}.
 So, we can apply (\ref{middle}) to the function $F(x, z_1, \cdots, z_{k-1})= f(x) z_{k-1}$ and the point $\omega=(x, 1,\cdots, 1)$. This gives
 \begin{equation*}
     \forall x \in X, \ \ \  \lim_{N\to \infty}
  \sup_{Q\in \mathbb{R}_{k-1}[t]} \left| \frac{1}{N}\sum_{n=0}^{N-1} e^{2\pi iQ(n) }\xi^{\binom{n}{k}} f(T^n x)\right| = 0.
 \end{equation*}
 Here we have used the facts that  $|\gamma(x)|=1$ and $\binom{n}{k-1}$ is a polynomial of degree $k-1$. Now observe that
 $$
    \xi^{\binom{n}{k}} = \eta^{k! \binom{n}{k}}= e^{2\pi i n(n-1)\cdots (n-k+1)\ell \alpha} = e^{2\pi i (n^k \ell \alpha + R(n))} 
 $$
  with $R\in \mathbb{R}_{k-1}[t]$. Thus we can conclude that if $e^{2\pi i \ell \alpha} \in G_0(T) $ we have
  \begin{equation}\label{Conl-2}
    \forall x \in X, \ \ \  \lim_{N\to \infty} \sup_{Q\in \mathbb{R}_{k-1}[t]} \left| \frac{1}{N}\sum_{n=0}^{N-1} e^{2\pi i(Q(n) +  n^k \ell \alpha)} f(T^n x) \right|= 0.
 \end{equation}
 
 Now we are going to take off $\ell$ in (\ref{Conl-2}) in order to finish the proof.
 Since $\eta = e^{2\pi i \ell \alpha}\in G_0(T)$, we have $\eta^\ell \in G_0(T^\ell)$. Then $e^{2\pi i \ell^k \alpha} \in G_0(T^\ell)$ because $G_0(T^\ell)$ is a group.
 We are going to apply ( \ref{Conl-2}) to the system $(X, T^\ell)$. First remark that, 
 by the transitivity of $T^j$ and Lemma \ref{EigenInv}, the system $(X, T^\ell)$ has the properties (H1) and (H2).
 On the other hand, as $f\in G_k(T)^\perp$ and  $G_k(T)$ is stable under $T$,  we have $f\circ T^j \in G_k(T)^\perp$ for all $j\ge 0$. 
 By Lemma \ref{EigenInv}, $f\circ T^j \in G_k(T^\ell)^\perp$ for all $j\ge 0$. So, we can apply (\ref{Conl-2})
 to the system $(X, T^\ell)$, the function $f\circ T^j$ and $\ell^k \alpha$ (replacing $\ell \alpha$) in order  to get
\begin{equation*}
   \forall x \in X, \ \ \  \lim_{N\to \infty} \sup_{Q\in \mathbb{R}_{k-1}[t]} \left| \frac{1}{N}\sum_{n=0}^{N-1} e^{2\pi i(Q(n) +  n^k \ell^k \alpha)} f\circ T^j(T^{\ell n} x) \right|= 0
 \end{equation*}
 which is equivalent to 
 \begin{equation*}
   \forall x \in X, \ \ \  \lim_{N\to \infty} \sup_{Q\in \mathbb{R}_{k-1}[t]} \left| \frac{1}{N}\sum_{n=0}^{N-1} e^{2\pi i(Q(n\ell +j) +  (n\ell+j)^k \ell \alpha)} f(T^{n \ell  +j} x) \right|= 0.
 \end{equation*}This allows us to deduce (\ref{AA}) by taking average over $0\le j <\ell$.


\end{proof}

\section{Nilsystems}
Let $s\ge 1$ be an integer. Let $N$ be a $s$-step, simply connected nilpotent Lie group,  $\Gamma$  a discrete subgroup of $N$ such that $N/\Gamma$  is compact. The $s$-step nilpotence means that we have the following  lower central series
$$
   G_1 \vartriangleright G_2  \vartriangleright \cdots \vartriangleright G_s \vartriangleright G_{s+1}=\{e\}
$$
where   $G_{i+1} = [N,G_i]$ for
$i \ge 1$  with $G_0 = G_1 = N$. Recall that the commutator group $[N,G_i]$ is the group generated by
all commutators $hgh^{-1}g^{-1}$ with  $h \in N,   g \in G_i$. 
Then the quotient space $N/\Gamma$ is
called an $s$-step nilmanifold. Any $g\in N$   acts on $N/\Gamma$ by left multiplication $x \Gamma \mapsto gx \Gamma$.
This left translation will be denoted by $T_g: N/\Gamma \to N/\Gamma$, called a $s$-step nilsystem. 

A (basic) $s$-step nilsequence is a sequence of
the form $(f(T_g^n x))$ i.e. $(f(g^n x))$ where $x$ is a point of $N/\Gamma$ and $f: N/\Gamma\to \mathbb{C}$ a continuous function.

The additive group $\mathbb{R}^d$ is $1$-step nilpotent and the torus $\mathbb{T}^d: =\mathbb{R}^d/\mathbb{Z}^d$ is a $1$-step nilmanifold. 

\subsection{Fully oscillating nilsequences} The following is the restatement of Theorem B in Introduction. 

\begin{thm}  \label{Bb}
Let $G$ be a connected and simply connected nilpotent Lie group, $\Gamma$ a discrete cocompact subgroup of $G$ and $g\in G$. Let $X=G/\Gamma$ be the nilmanifold and let   $T: X \to X$ be defined by $x\Gamma \mapsto gx\Gamma$.
Suppose that $(X, T)$ is uniquely ergodic. Then for any $F \in C(X)$ such that $F\in G_1(T)^\perp$ and any $x \in G$, the sequence $F(g^n x \Gamma)$ is fully oscillating. 
\end{thm}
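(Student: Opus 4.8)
The plan is to derive Theorem~\ref{Bb} from the topological Wiener--Wintner theorem (Theorem~\ref{OTWWT}) applied to the nilsystem $(X,T)=(G/\Gamma,T_g)$ at every order $k$. A sequence is fully oscillating precisely when it is oscillating of every order $k\ge 1$, and for a fixed $k$ conclusion (b) of Theorem~\ref{OTWWT} gives (by discarding the supremum) oscillation of order $k$ of $F(g^n x\Gamma)$ for every starting point $x\Gamma\in X$. So it suffices to verify, for each $k$, the three inputs of Theorem~\ref{OTWWT}: total unique ergodicity (H1), the coincidence $E_j(T)=G_j(T)$ for $0\le j\le k$ (H2), and $F\in G_k(T)^\perp$. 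The crux is that the last condition is \emph{free}: I will show that for a connected nilsystem one has $G_k(T)=G_1(T)$ for all $k$, so that the standing hypothesis $F\in G_1(T)^\perp$ already forces $F\in G_k(T)^\perp$. Equivalently $G_1(T)=G_\infty(T)$, which reconciles the present statement with the $G_\infty(T)^\perp$ formulation of Theorem~B.

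\textbf{Step 1 (condition (H1)).} A nilsystem is minimal iff uniquely ergodic, and this holds iff the induced rotation by $\bar g$ on the maximal torus factor $\mathbb{T}^d=G/\overline{[G,G]\Gamma}$ is minimal, i.e. $\langle \ell,\bar g\rangle\notin\mathbb{Z}$ for every $\ell\in\mathbb{Z}^d\setminus\{0\}$. Replacing $\ell$ by $j\ell$ shows this persists for $j\bar g$, so every $T^j=T_{g^j}$ is minimal, hence uniquely ergodic; thus (H1) holds and the invariant measure, being that of a minimal system, has full support. The same computation shows no eigenvalue has nontrivial finite order.

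\textbf{Step 2 (the collapse $G_k(T)=G_1(T)$, the key step).} Because $G$ is connected, a path from $e$ to $g$ yields a homotopy $x\Gamma\mapsto g_t x\Gamma$ from the identity to $T=T_g$; hence $T$ acts as the identity on the first cohomology $H^1(X;\mathbb{Z})=[X,\mathbb{S}]$. I argue $G_k=G_1$ by induction: for $f\in G_k$ write $f\circ T=g\,f$ with $g\in G_{k-1}=G_1$, compare homotopy classes using $[f\circ T]=[f]$, and deduce $[g]=0$. Now $G_1$ consists of the pullbacks $\chi\circ\pi$ of characters $\chi$ of $\mathbb{T}^d$ along the torus factor $\pi\colon X\to\mathbb{T}^d$, and $\pi^*\colon H^1(\mathbb{T}^d;\mathbb{Z})\to H^1(X;\mathbb{Z})$ is an isomorphism onto $\mathbb{Z}^d$ (Nomizu's theorem, since $H^1$ of a nilmanifold comes from the abelianization); hence $[g]=0$ forces $\chi=0$, so $g$ is constant and $f\in G_1$. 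This is exactly where the connected, simply connected structure enters: it fails for the affine unipotent torus systems, where $T^*\neq\mathrm{id}$ on $H^1$ and indeed $G_2\neq G_1$.

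\textbf{Step 3 (condition (H2)) and conclusion.} Since there are no finite-order eigenvalues (Step~1), the orthogonality proposition of Section~2.2 applies, and for minimal nilsystems the measurable Kronecker factor coincides with the topological one, so $E_1(T)=G_1(T)$; more generally the measurable quasi-eigenfunctions of a minimal nilsystem are continuous, giving $E_k(T)=G_k(T)$, which combined with Step~2 yields $E_k(T)=G_k(T)=G_1(T)$ and hence (H2) at every order. With (H1), (H2) and $F\in G_1(T)^\perp=G_k(T)^\perp$ in hand, Theorem~\ref{OTWWT} at order $k$ gives, for every $x\Gamma\in X$, the vanishing of $\sup_{P\in\mathbb{R}_k[t]}\bigl|\tfrac1N\sum_{n=0}^{N-1}e^{2\pi i P(n)}F(g^n x\Gamma)\bigr|$; letting $k\to\infty$ shows $F(g^n x\Gamma)$ is fully oscillating. \textbf{The main obstacle} is Step~2 together with the continuity half of Step~3: the heart of the matter is that \emph{continuous} higher quasi-eigenfunctions collapse to order one on a connected simply connected nilmanifold, so that the apparently weak hypothesis $F\in G_1(T)^\perp$ is in fact the full orthogonality required, while the measurable rigidity $E_k=G_k$ needed to feed Theorem~\ref{OTWWT} rests on the structure theory of minimal nilsystems.
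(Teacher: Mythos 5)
Your skeleton is the same as the paper's: deduce Theorem~\ref{Bb} from Theorem~\ref{OTWWT} by verifying (H1), (H2), and the collapse of quasi-eigenfunctions, so that $F\in G_1(T)^\perp$ already gives $F\in G_k(T)^\perp$ for every $k$. Step~1 is correct and standard. Step~2 is correct and is in fact a genuinely different (and more detailed) route than the paper's for the \emph{continuous} collapse $G_k(T)=G_1(T)$: since $G$ is connected, $T$ is homotopic to the identity, hence $T^*=\mathrm{id}$ on $H^1(X;\mathbb{Z})\cong[X,\mathbb{S}]$, so the multiplier $g$ of any $f\in G_k(T)$ has trivial Bruschlinsky class; one can even avoid Nomizu here, since $[g]=0$ lets you write $g=e^{2\pi i\phi}$ with $\phi$ continuous, and $f\circ T=gf$ plus unique ergodicity and minimality force $\phi$, hence $g$, constant. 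The paper argues the collapse quite differently and directly on the \emph{measurable} side: a second-order quasi-eigenfunction would generate a $2$-step Abramov factor, which would again be a nilsystem of a connected, simply connected group, and such a factor supports no second-order eigenfunctions beyond true eigenfunctions.

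The genuine gap is exactly at what you call the ``continuity half of Step~3'': the identity $E_j(T)=G_j(T)$ for $2\le j\le k$ is asserted (``the measurable quasi-eigenfunctions of a minimal nilsystem are continuous \dots rests on the structure theory of minimal nilsystems'') but never proved, and it is not an off-the-shelf citation. Your topological Step~2 sees only continuous functions; it says nothing about a merely measurable $f$ with $|f|=1$ and $f\circ T=\lambda\,(\chi\circ\pi)\,f$ for a nontrivial torus character $\chi$, whose nonexistence is precisely what $E_2(T)=E_1(T)$ demands. Measurable eigenfunctions of uniquely ergodic systems can genuinely fail to be continuous (Robinson's skew-product example quoted in this very paper), so ``minimal nilsystem'' must do real work; moreover the Proposition of Section~2.2 you invoke only yields mutual orthogonality of quasi-eigenfunctions, not their continuity --- for $E_1(T)=G_1(T)$ the correct tool is Green's spectral theorem (Lebesgue spectrum on the orthocomplement of the torus factor). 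For $j\ge 2$ the required statement amounts to the nonexistence of a measurable Abramov part of order $\ge 2$, and this is where the paper's factor argument carries the load. Since (H2) of Theorem~\ref{OTWWT} is exactly $E_j=G_j$, your application of that theorem rests on an unsupported hypothesis until an argument of this kind is supplied.
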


This is a direct consequence of Theorem \ref{OTWWT}, because there is no other quasi-eigenfunctions than eigenfunctions, which are all continuous, for any ergodic nilsystem
      associated to a connected and simply
      connected nilpotent Lie groups. 
In fact, if the system had higher order eigenfunctions, then it would have 
second order eigenfunctions.  These second order eigenfunctions  live on a 2-step factor which would also be associated to connected and
simply connected nilpotent Lie group, but which supports no second order eigenfunctions  other than true eigenfunctions.\medskip

It is the moment to give some comments. First recall the following theorem due to Lesigne. 

\begin{thm}[Lesigne \cite{L1989,L1991}] \label{Lesigne}Let $N/\Gamma$ be a nilmanifold and let $a\in N$. For any continuous function $f\in C(N/\Gamma)$ and any point $x \in N/\Gamma$, the following limit
exists
$$
         \lim_{N\to \infty} \frac{1}{N}\sum_{n=0}^{N-1} f(a^n x).
$$
\end{thm}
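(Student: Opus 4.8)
The plan is to deduce everywhere convergence of the averages from \emph{unique ergodicity on the orbit closure}. Fix $a\in N$ and $x\in N/\Gamma$, and write $T=T_a$ for the translation $y\mapsto ay$ on $N/\Gamma$. Let $Y:=\overline{\{T^n x: n\ge 0\}}$ be the forward orbit closure of $x$; it is a nonempty compact $T$-invariant set on which $T$ acts with a point (namely $x$) having dense orbit, so $(Y,T|_Y)$ is transitive. Once we know that $(Y,T|_Y)$ is uniquely ergodic with invariant measure $\nu_Y$, the standard equivalence between unique ergodicity and uniform convergence of Birkhoff averages applies to the continuous function $f|_Y\in C(Y)$ and gives
\[
\frac1N\sum_{n=0}^{N-1} f(a^n x)\ \xrightarrow{\,N\to\infty\,}\ \int_Y f\,d\nu_Y ,
\]
uniformly over starting points in $Y$; since $a^n x\in Y$ and $f(a^n x)=(f|_Y)(T^n x)$, this is exactly the asserted convergence at the point $x$.

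Everything therefore reduces to the unique ergodicity of $(Y,T|_Y)$, which is where I would invoke the structure theory of nilsystems. After the routine reduction to the case where $N$ is connected and simply connected, the two key inputs are: (i) the orbit closure $Y$ of any point is again a sub-nilmanifold (up to translation) of $N/\Gamma$, on which $T|_Y$ is \emph{minimal}; and (ii) every minimal nilsystem is uniquely ergodic (Parry, Furstenberg). Fact (ii) I would prove by induction on the nilpotency step $s$, exploiting the central series in the excerpt: since $G_s$ is central, $N/\Gamma$ is a group extension of the $(s-1)$-step nilmanifold $N/(G_s\Gamma)$ by the central torus $G_s/(G_s\cap\Gamma)$. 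Minimality of $T$ forces minimality of the factor, which is uniquely ergodic by the induction hypothesis (the base case $s=1$ being a minimal torus rotation, uniquely ergodic by Weyl). Unique ergodicity then propagates up the tower via Furstenberg's criterion (cf.\ Lemma~\ref{lem_Furs}): the relevant cohomological equation $\phi^k=h\circ T/h$ has no measurable solution, which one checks by a van der Corput / Weyl estimate. Combining (i) and (ii) shows $T|_Y$ is uniquely ergodic and closes the argument.

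The main obstacle is fact (i), the identification of the orbit closure with a sub-nilmanifold carrying a minimal translation; this is the genuinely nilpotent input, and it is exactly where the homogeneous structure is indispensable, since for a general transitive topological system the forward orbit closure need not be so rigid. The supporting fact (ii) is classical but also substantive, resting on the inductive van der Corput scheme sketched above. By contrast, the reduction to connected simply connected $N$, the continuity of the restriction $f|_Y$, and the passage from unique ergodicity to pointwise (indeed uniform) convergence are all routine and introduce no new difficulties.
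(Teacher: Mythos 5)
The paper does not actually prove this statement: it is quoted from Lesigne \cite{L1989,L1991} as a known result, so there is no internal proof to compare against. Your outline --- restrict to the orbit closure, identify it as a sub-nilmanifold carrying a minimal translation, prove that minimal nilsystems are uniquely ergodic by induction on the nilpotency step, then invoke the equivalence of unique ergodicity with uniform convergence of Birkhoff averages of continuous functions --- is indeed the classical route (Lesigne, following Green, Parry, Furstenberg), and the final reduction from unique ergodicity on $Y$ to pointwise convergence at $x$ is sound.

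Two caveats, one of which is a genuine gap. First, your facts (i) (orbit closures are sub-nilmanifolds on which the translation is minimal) and (ii) (minimal nilsystems are uniquely ergodic) together constitute essentially the entire content of the cited theorem; leaving both as black boxes makes the proposal a reduction to the literature rather than a proof. Second, and more substantively, the inductive scheme you sketch for (ii) fails as literally stated: minimality of a compact group extension over a uniquely ergodic base does \emph{not} in general imply unique ergodicity of the extension --- Furstenberg's example of a minimal but non-uniquely-ergodic analytic skew product over an irrational rotation of the circle defeats the general statement, so "unique ergodicity propagates up the tower" cannot be justified by minimality plus Furstenberg's criterion alone. What rescues nilsystems is that the cocycle defining the extension by the central torus $G_s/(G_s\cap\Gamma)$ is not arbitrary: it is polynomial in the Mal'cev coordinates, and it is precisely this algebraic rigidity that lets one rule out measurable solutions of $\phi^k=h\circ T/h$ via Fourier analysis on the vertical torus and Weyl sums (compare the mechanics of Lemma \ref{Ext-Erg2} in this paper, where the special form of the cocycle is used in exactly this way). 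You gesture at "a van der Corput / Weyl estimate," but unless the argument explicitly exploits the polynomial form of the nil-cocycle, the step from minimality of the factor to unique ergodicity of the extension is exactly where the proof would break down.
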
 

The essential point of this theorem is the everywhere existence of the limit of the ergodic averages (with constant weights).  
The almost every convergence of the following multiple ergodic limit 
\begin{equation}\label{Nil-ErgodicMean}
        \lim_{N\to \infty} \frac{1}{N}\sum_{n=0}^{N-1} \prod_{k=1}^\ell f_k(a^{kn} x), \quad (f_1, \cdots, f_\ell \in L^\infty(m_{N/\Gamma})
\end{equation}
was proved by Lesigne \cite{L1989, L1991}. Under the assumption of ergodicity, an explicit formula for the limit was found by Lesigne \cite{L1991}
for $2$-step nilsystems, by Ziegler \cite{Ziegler2005} for all nilsystems. This formula was later generalized to the case where $n,2n, \cdots, \ell n$
are replaced by polynomials by Leibman \cite{Leibman2005}.

As pointed by B. Host \cite{Host2016} (personal communication),
it could be possible to deduce Theorem \ref{Bb}  from Theorem \ref{Lesigne}. The reason is as follows.
Let $U$ be a $d\times d$ unipotent
matrix with integer entries and $b \in \mathbb{T}^d$. Then the affine map
$S y = Uy +b$ 
defines an  affine $d$-step nilsystem.
If $P\in \mathbb{R}_d[t]$,
then the sequence $e^{2 \pi i P(n)}$ 
is produced  by an affine  $d$-step nilsystem, namely  there exists an affine $d$-step nilsystem $(\mathbb{T}^d, S)$ and a point $y_0\in \mathbb{T}^d$  such that
$e^{2 \pi i P(n)} = f(S^n y_0)$ 
 for every $n$.
Let $(N/\Gamma, T_g)$ be an ergodic nilsystem (hence minimal and uniquely
ergodic), a continuous function $F$ on $N/\Gamma$ and $x_0\in N/\Gamma$. 
Let $P, S, y_0$ and $f$ be as above. Then the sequence of general term $F(g^n x_0) e^{2\pi i P(n)}$
is produced by the nilsystem $(X\times Y, T\times S)$: 
$$
      F(g^n x_0)e^{2\pi i P(n)}  =  (F\otimes  f)(T_g \times S)^n(x_0,  y_0).
 $$
It follows from Theorem \ref{Lesigne} that the averages of this sequence
converge. More precisely, by Leibman \cite{Leibman2005} there exists a sub-nilmanifold $W$ of $X\times Y$  such that
\begin{equation}\label{LL}
    \lim_{N\to \infty}\frac{1}{N} \sum_{n=0}^{N-1}
     F(g^n x_0) e^{2\pi i P(n)} = \int_{W} F(x)f(y) dm_W(x, y)
 \end{equation}
 where $m_W$ is the Haar measure on $W$.
Note that $(W,  T\times S)$ is a joining of $(X, T_g)$ and $(Y, S)$.
However, to complete the argument, we need to prove that the integral on the RHS of (\ref{LL}) is equal to zero.

\medskip

\subsection{3-dimensional Heisenberg group}  Before proving Theorem C, we discuss a special Heisenberg group.
The 3-dimensional Heisenberg group
$$
       H:=\begin{pmatrix}   1  & \mathbb{R} &  \mathbb{R}\\
                                         &    1              &   \mathbb{R}\\
                                        &                     &  1
              \end{pmatrix}
$$
is a $2$-step simply connected nilpotent Lie group.  The group operation is the matrix multiplication. If we simply write $\langle x, y, z\rangle$ for an element of $H$, then the group  operation in $H$ is defined by
\begin{equation}\label{H-mutiplication}
  \langle a,b,c \rangle \langle x, y, z \rangle = \langle a+x, b+y, c+z + ay\rangle .
\end{equation}
If we take the subgroup
$$
       \Gamma:=\begin{pmatrix}   1  & \mathbb{Z} &  \mathbb{Z}\\
                                         &    1              &   \mathbb{Z}\\
                                        &                     &  1
              \end{pmatrix},
$$
we get a $2$-step nilmanifold $H/\Gamma$. Let $g= \langle \alpha_1,\alpha_2,\alpha_3 \rangle \in H$. Then 
$$
    T_g \langle x,y,z\rangle = \langle x+ \alpha_1,y + \alpha_2, z + \alpha_3 +  \alpha_1 y\rangle   \mod \Gamma.
$$
Take $\mathcal{F}= [0,1)\times [0,1) \times [0,1)$ as fundamental domain of $H/\Gamma$.
For $x =   \langle x_1, x_2, x_3\rangle \in N$, let 
$$
  \gamma_x =   \langle -[x_1], -[x_2], - [x_3 - x_1[x_2]]\rangle \in \Gamma.
$$
Then $\tau(x): =x \gamma_x \in \mathcal{F}$. Such a $\gamma_x$ is unique.  Actually we have
\begin{equation}\label{tau}
   \tau(x) =   \langle \{x_1\}, \{x_2\}, \{x_3 - x_1[x_2]\}\rangle .
\end{equation}

It can be  inductively proved that 
$$
   g^n =   \langle n\alpha_1, n \alpha_2, n \alpha_3 + C^2_n \alpha_1\alpha_2 \rangle.
$$
Then for any $x =   \langle x_1, x_2, x_3\rangle \in N$, we have
$$
   g^n x =   \langle n\alpha_1 +x_1, n \alpha_2 +x_2, n \alpha_3+x_3 + C_n^2\alpha_1\alpha_2 + n\alpha_1 x_2 \rangle. 
$$
Then for the map $T_g : N/\Gamma \to N/\Gamma$ with $N/\Gamma$ represented by the fundamental domain $\mathcal{F}$, by (\ref{tau}) we have
\begin{equation}\label{gnx}
  T_g^n x =  \langle n\alpha_1 +x_1, n \alpha_2 +x_2, n \alpha_3+x_3 + C_n^2\alpha_1\alpha_2 + n\alpha_1 x_2 - (n\alpha_1 +x_1)[n\alpha_2 +x_2] \rangle 
\end{equation}
where  the coordinates on the RHS are considered $\mod \Gamma$.
In particular,
\begin{equation}
  T_g^n 0 =  \langle \{n\alpha_1\}, \{n \alpha_2\}, \{n \alpha_3 + C_n^2\alpha_1\alpha_2 -  n\alpha_1 [n\alpha_2]\} \rangle.
\end{equation}
\medskip

Let us give here a direct proof of no second order quasi-functions in the case of Heisenberg ergodic translation. 
Let $F(x_1, x_2, x_3)$ be a second order quasi-eigenfunction, i.e. $T_gF = h F$ with $h$  an eigenfunction, which is of the form $ae^{2\pi i (kx+jy)}$ with $|a|=1$, $(k, j)\in \mathbb{Z}^2$. Since $F$ is orthogonal to all eigenfunctions,
$F$ is independent of $x_1$ and $x_2$. So $F(x_1, x_2, x_3) = f(x_3)$ for some function $f$. Then, using (\ref{tau}) and (\ref{gnx}), we can write the equation $T_gF = h F$  as
$$
    f(\alpha_3 + x_3 +\alpha_1 x_2 -[\alpha_1 +x_1](\alpha_2 +x_2)) = h(x_1, x_2) f(x_3).
$$
For almost all $(x_1, x_2)$ we develop the function of $x_3$ into Fourier series
$$
     \sum_n \widehat{f}(n) e^{2\pi i n(\alpha_3 +\alpha_1 x_2 -(\alpha_1 +x_1)[\alpha_2 +x_2])} e^{2\pi i n x_3}
        = h(x_1, x_2) \sum_n \widehat{f}(n) e^{2\pi i n x_3}
$$
So, by comparing the Fourier coefficients, we obtain that, for a fixed $n$,  either $\widehat{f}(n) =0$ or for almost all $(x_1, x_2)$
$$
    e^{2\pi i n(\alpha_3 +\alpha_1 x_2 -(\alpha_1 +x_1)[\alpha_2 +x_2])} 
        = h(x_1, x_2) .
$$
In other words, for   $0\le x_2<1-\alpha_1$ we have
$$e^{2\pi i n(\alpha_3 +\alpha_1 x_2)} 
        = h(x_1, x_2) $$  which is impossible;  and for   $1-\alpha_1\le x_2 <1$ we have
$$e^{2\pi i n(\alpha_3-\alpha_1 +\alpha_1 x_2 -x_1)}  = h(x_1, x_2),$$
which is impossible too. Thus $F$ must be constant and $h$ must be $1$.  
\medskip

Let us consider a Mal'cev basis of $H$, consisting of 
$$
       e_1=\begin{pmatrix}   1  &1 &  0\\
                                         &    1              &  0\\
                                        &                     &  1
              \end{pmatrix},\quad
               e_2=\begin{pmatrix}   1  &0 &  0\\
                                         &    1              &  1\\
                                        &                     &  1
              \end{pmatrix},
              \quad
               e_2=\begin{pmatrix}   1  & 0&  1\\
                                         &    1              &  0\\
                                        &                     &  1
              \end{pmatrix}.
$$
See \cite{Corwin-Greenleaf} for Mal'cev bases.
These elements determine three one-parameter subgroups $(e^t_i)_{t \in \mathbb{R}}$ ($i=1,2,3$): 
$$
       e_1^t=\begin{pmatrix}   1  &t &  0\\
                                         &    1              &  0\\
                                        &                     &  1
              \end{pmatrix},\quad
               e_2^{t}=\begin{pmatrix}   1  &0 &  0\\
                                         &    1              &  t\\
                                        &                     &  1
              \end{pmatrix},
              \quad
               e_2^t=\begin{pmatrix}   1  & 0&  t\\
                                         &    1              &  0\\
                                        &                     &  1
              \end{pmatrix}.
$$
Any element $g\in H$ has a unique representation as follows
$$
      g=e_1^{t_1}e_2^{t_2} e_3^{t_3} = \begin{pmatrix}   1  & t_1&  t_3 + t_1t_2\\
                                         &    1              &  t_2\\
                                        &                     &  1
                                          \end{pmatrix}.
$$
The triple $t_1, t_2, t_3$ will be denoted $\langle t_1, t_2, t_3\rangle_{\mbox{\tiny \rm II}}$, called the Mal'cev coordinates of second kind of $g$.
We write  $g = \phi_{\mbox{\tiny \rm II}}(t_1, t_2,t_3)$, or simply $g=\langle t_1, t_2, t_3\rangle_{\mbox{\tiny \rm II}}$. Notice that $ \phi_{\mbox{\tiny \rm II}}: \mathbb{R}^3 \to H$
is a diffeomorphism and that
$\Gamma =  \phi_{\mbox{\tiny \rm II}} (\mathbb{Z}^3)$. Also notice that
$$
    \langle t_1, t_2, t_3\rangle_{\mbox{\tiny \rm II}} = \langle t_1, t_2, t_3 + t_1t_2\rangle,\quad
     \langle a, b, c\rangle = \langle a, b, c  -ab \rangle_{\mbox{\tiny \rm II}}.
$$

The group law is expressed by the Mal'cev coordinates as follows
\begin{equation}\label{Mal2op}
     \langle t_1, t_2, t_3\rangle_{\mbox{\tiny \rm II}} * \langle s_1, s_2, s_3\rangle_{\mbox{\tiny \rm II}}
     = \langle t_1 +s_1, t_2+s_2, t_3+s_3 - t_2 s_1\rangle_{\mbox{\tiny \rm II}}.
\end{equation}
In fact
\begin{eqnarray*}
   && \langle t_1, t_2, t_3\rangle_{\mbox{\tiny \rm II}} * \langle s_1, s_2, s_3\rangle_{\mbox{\tiny \rm II}}\\
   & = & \langle t_1, t_2, t_3 + t_1 t_2\rangle \langle s_1, s_2, s_3 + s_1s_2\rangle\\
   & = & \langle t_1+s_1, t_2 +s_2, (t_3 + t_1 t_2)+(s_3 + s_1s_2) +t_1s_2\rangle \\
    & = & \langle t_1+s_1, t_2 +s_2, (t_3 + t_1 t_2)+(s_3 + s_1s_2) +t_1s_2 - (t_1+s_1)(t_2+s_2)\rangle_{\mbox{\tiny \rm II}}\\
    & = & \langle t_1+s_1, t_2 +s_2, t_3 + s_3 - t_2 s_1\rangle_{\mbox{\tiny \rm II}}
\end{eqnarray*}

Let $x =   \langle x_1, x_2, x_3\rangle_{\mbox{\tiny \rm II}}$. Let 
$$
  \gamma_x =   \langle -[x_1], -[x_2], - [x_3 + [x_1]x_2]\rangle_{\mbox{\tiny \rm II}} \in \Gamma.
$$
Then $\tau_2(x): =x \gamma_x \in \mathcal{F}$. We have
$$
   \tau(x) =   \langle \{x_1\}, \{x_2\}, \{x_3 + [x_1]x_2\}\rangle_{\mbox{\tiny \rm II}}
$$

Let $g =   \langle \alpha_1, \alpha_2, \alpha_3\rangle_{\mbox{\tiny \rm II}}$. Inductively we get
$$
   g^n =   \langle n\alpha_1, n \alpha_2, n \alpha_3 -C_n^2 \alpha_1\alpha_2 \rangle_{\mbox{\tiny \rm II}}
$$
then for any $x =   \langle x_1, x_2, x_3\rangle_{\mbox{\tiny \rm II}}$, we have
$$
   g^n x =   \langle n\alpha_1 +x_1, n \alpha_2 +x_2, n \alpha_3+x_3 - C_n^2\alpha_1\alpha_2 - n\alpha_2 x_1 \rangle_{\mbox{\tiny \rm II}}
$$
Then for $T_g : N/\Gamma \to N/\Gamma \to$ with $N/\Gamma$ represented by the fundamental domain $\mathcal{F}$,  $\mod 1$ we have
$$
  T_g^n x =  \langle n\alpha_1 +x_1, n \alpha_2 +x_2, n \alpha_3+x_3 - C_n^2\alpha_1\alpha_2 - n\alpha_2 x_1 - [n\alpha_1 +x_1](n\alpha_2 +x_2) \rangle_{\mbox{\tiny \rm II}}   
$$
In particular
\begin{equation}\label{TgII}
  T_g^n 0 =  \langle n\alpha_1, n \alpha_2, n \alpha_3 - C_n^2\alpha_1\alpha_2 -  [n\alpha_1] n\alpha_2 \rangle_{\mbox{\tiny \rm II}}     \mod 1.
\end{equation}




\medskip
\subsection{Proof of Theorem C}
Consider the $(2m+1)$-dimensional Heisenberg group $H_m$ ($m\ge 1$), which is the space $\mathbb{R}^{2m+1} = \mathbb{R}^m \times \mathbb{R}^m \times \mathbb{R}$
equipped with the group law defined by 
   \begin{equation}\label{Hn-mutiplication}
  \langle a,b,c \rangle \langle x, y, z \rangle = \langle a+x, b+y, c+z + B(a, y)\rangle 
\end{equation}
for $(a, b, c)\in  \mathbb{R}^m \times \mathbb{R}^m \times \mathbb{R}$ and $(x, y, z) \in  \mathbb{R}^m \times \mathbb{R}^m \times \mathbb{R}$, where 
$B:  \mathbb{R}^m \times \mathbb{R}^m \to \mathbb{R}$ is  the bilinear form
$$
    B(a, y) = \sum_{i=1}^m a_i y_i.
$$
Let $g= \langle \alpha,\beta,\gamma \rangle \in H_n$ with $\alpha = (\alpha_1, \cdots, \alpha_m)$, $\beta=(\beta_1, \cdots, \beta_m)$ and $\gamma \in \mathbb{R}$ (any choice for $\gamma$). We consider the translation $T_g$ defined by $g$:
\begin{equation}\label{H-mutiplication}
  T_g\langle x, y, z \rangle = \langle \alpha+x, \beta+y, \gamma +z + B(\alpha, y)\rangle .
\end{equation}
Take $\Gamma_m =\mathbb{Z}^{2m+1} $. 
For $\overline{x}=\langle x, y, z\rangle \in H_m$. Let 
$$
  \gamma_{\overline{x}} =   \langle -[x], -[y], - [z - B(x, [y])\rangle \in \Gamma_m.
$$
Then $\tau({\overline{x}}): ={\overline{x}} \gamma_{\overline{x}} \in \mathcal{F}_m: =[0,1)^{2m+1}$. Such a $\gamma_{\overline{x}}$ is unique.  We have
\begin{equation}\label{taum}
   \tau(\overline{x}) =   \langle \{x\}, \{y\}, \{z- B(x, [y])\}\rangle 
\end{equation}

We have
$$
   g^n =   \langle n\alpha, n \beta, n \gamma + C^2_n B(\alpha, \beta) \rangle.
$$
Then for any $\overline{x} =   \langle x, y, z\rangle \in N:=H_m/\Gamma_m$, we have
$$
   g^n \overline{x} =   \langle n\alpha +x, n \beta +y, n \gamma+z + C_n^2B(\alpha,\beta)+ n B(\alpha,  y) \rangle 
$$
Then for the map $T_g : H_m/\Gamma_m \to H_m/\Gamma_m$ with $H_m/\Gamma_m$ represented by the fundamental domain $\mathcal{F}_m$, by (\ref{taum}) we have
\begin{equation}
  T_g^n 0 =  \langle \{n\alpha\}, \{n \beta\}, \{n \gamma + C_n^2B(\alpha, \beta) -  B(n\alpha,  [n\beta]\}) \rangle.
\end{equation}
The condition on $\alpha$ and $\beta$ implies that $T_g$ is totally ergodic, by a theorem of Green \cite{L-Green} (see a simpler proof in \cite{Parry1970})
and that there is no quasi eigenfunctions other than true eigenfunctions.

Let $$
\omega_n =  n \gamma + C_n^2B(\alpha, \beta) -   B( n\alpha,  [n\beta]])\ \ \   (\!\!\!\!\! \mod 1). 
$$
Fix an integer $m \in \mathbb{Z} \setminus\{0\}$. We can apply Theorem \ref{Bb} to $F(x, y, z) =e^{2\pi i m z}$, which is orthogonal to all eigenfunctions, and  we get that
the sequence
$(e^{2\pi i m \omega_n})$ is orthogonal to all polynomial sequences $e^{2\pi i P(n)}$ with $P\in \mathbb{R}[t]$. In other words,  
$(e^{2\pi i m \omega_n})$ is fully oscillating. 
Since $Q(n) =  n \gamma + C_n^2B(\alpha, \beta)$ is a real polynomial of $n$,    so the sequence 
$(e^{- 2\pi i m B(n\alpha,  [n\beta])})$ then the sequence $(e^{ 2\pi i m B(n\alpha,  [n\beta])})$ is fully oscillating.

Let $\varphi \in C(\mathbb{T})$ with $\int \varphi(x)dx=0$. Now we claim that the sequence  $\varphi (B(n\alpha,  [n\beta]))$
is fully oscillating. In fact, for any $\epsilon>0$ there exists a trigonometric polynomial $g$ without the constant term such that 
$|\varphi(x)-g(x)|<\epsilon$. Then for any $P\in \mathbb{R}[t]$ we have
\begin{eqnarray*}
 &&\limsup_{N\to \infty}\left|\frac{1}{N}\sum_{n=0}^{N-1} e^{2\pi i P(n)} \varphi(B(n\alpha, [n\beta]))\right|\\
 &\le&  \limsup_{N\to \infty}\left|\frac{1}{N}\sum_{n=0}^{N-1} e^{2\pi i P(n)} g(B(n\alpha, [n\beta]))\right| + \epsilon.
\end{eqnarray*}
But  the last limit is equal to zero by   the full oscillation of $(e^{2\pi i mB(n\alpha, [n\beta]) })$ that we have already proved. 
\medskip

Similarly we can treat other generalized polynomial sequences by looking at different nilmanifolds. 

Bergelson \cite{Bergelson2017} pointed out to us that  Lemma 5.1 of Haland \cite{Haland1993}  
stated a result similar to Theorem C for polynomials of degree $2$.
Konieczy \cite{Konieczny} observed that it could be possible to give an alternative proof of Theorem C, replacing the application of Theorem B by the application of  the equidistribution developed by Leibman \cite{Leibman2012}.

\medskip
{\em Acknowledgement.}  I would like to thank V. Bergelson, B. Host, A. Leibman, M. Lemanczyk, B. Weiss, T. Ziegler  for valuable suggestions and informations.  This work is partially supported by NSFC No. 11471132 (China) and by Knuth and Alice Wallenberg Foundation (Sweden). 



\end{document}